\documentclass[12pt, reqno]{amsart}
\usepackage[bookmarksnumbered, plainpages]{hyperref}
\usepackage{amsmath,amssymb,graphicx,accents}
\usepackage{enumerate,mdwlist}
\usepackage{enumitem}
\usepackage{esint}
\usepackage[symbol]{footmisc}
\usepackage{mathtools}
\usepackage{mathrsfs}

\usepackage{tikz}

\usepackage{thm-restate}

\usepackage[justification=centering]{caption}

\numberwithin{equation}{section}

\usepackage{amsthm}

\usepackage{verbatim}

\usepackage{nag}

\usepackage{tikz-cd} 
\usepackage{tikz-3dplot}

\usepackage{pgfplots}
\pgfplotsset{compat=1.18}

\makeatletter
\DeclareRobustCommand{\hvec}[1]{{\mathpalette\hvec@{#1}}}
\newcommand{\hvec@}[2]{%
  \vbox{\offinterlineskip
    \ialign{%
      \hfil##\hfil\cr
      $\m@th#1{}_{\rightharpoonup}$\kern-\scriptspace\cr
      $\m@th#1#2$\cr
    }%
  }%
}
\makeatother

\newcommand{\fdim}{\text{dim}_{\mathbf{F}}}

\DeclareMathOperator{\fordim}{\dim_{\mathbf{F}}}
\DeclareMathOperator{\adim}{\dim_{\mathbf{A}}}

\DeclareMathOperator{\RR}{\mathbf{R}}

\DeclareMathOperator{\NN}{\mathbf{N}}

\newtheorem{theorem}{Theorem}[section]
\newtheorem{lemma}[theorem]{Lemma}

\newtheorem{prop}[theorem]{Proposition}

\newtheorem{remark}[theorem]{Remark}

\allowdisplaybreaks

\begin{document}

\centerline{}

\title{The Maximum Codimension of a Salem Submanifold} % and More General Spectral Multipliers on Manifolds}
\author[Jacob Denson]{Jacob Denson$^*$}
\address{$^{*}$ University of Edinburgh, Edinburgh, Scotland, Jacob.Denson@ed.ac.uk}
%\dedicatory{This paper is dedicated to Professor ABCD}
\subjclass[2020]{Primary: 42B10; Secondary: 42A38, 42B20, 53A07, 28A12.}
\keywords{Salem Sets, Fourier Dimension, Oscillatory Integrals, Stationary Sets, Radon Hurwitz Numbers, Geometric Measure Theory.}
%\date{Received: xxxxxx; Revised: yyyyyy; Accepted: zzzzzz.}
%\newline \indent $^{*}$ Corresponding author}

\begin{abstract}
    We determine a geometric condition necessary and sufficient for an $m$-dimensional manifold in Euclidean space to support a probability measure $\mu$ satisfying the Fourier decay bound $|\widehat{\mu}(\zeta)| \lesssim_\varepsilon |\zeta|^{\varepsilon - m/2}$ for all $\varepsilon > 0$. As a result, for each $m > 0$, we explicitly determine the largest codimension of an $m$-dimensional smooth submanifold $M$ of Euclidean space which is a Salem set. This largest codimension is precisely expressible in terms of the Radon-Hurwitz numbers. In particular, we find that the only odd dimensional manifolds which can be Salem sets are hypersurfaces, and that the largest codimension of an $m$-dimensional manifold which is a Salem set is upper bounded by $2 \lg(m/2) + 3$, and equal to $2 \lg(m/2) + 3$ when $m$ is a power of 16. The proof strategy, which involves covering manifolds by certain stationary sets associated with the Fourier transform on that manifold, is robust, and we demonstrate its use by proving that all nondegenerate curves in $\mathbf{R}^n$ have Fourier dimension equal to $2/n$, and find an alternate proof of a result of Junjie Zhu on the Fourier dimension of hypersurfaces with a fixed number of vanishing principal curvatures.

%    in Euclidean space supporting a probability measure $\mu$ satisfying the Fourier decay bound . In other words, we calculate the largest codimension of a $m$-dimensional manifold which is a Salem set. 
\end{abstract}

\maketitle

\vspace{-1em}

\section{Introduction}

The \emph{Fourier dimension} of a set $S \subset \RR^d$, denoted $\fdim(S)$, is the supremum of all $s \in (0,d]$ for which there exists a finite, non-negative Borel measure $\mu$ satisfying $|\widehat{\mu}(\zeta)| \lesssim |\zeta|^{-s/2}$ uniformly for all $\zeta \in \RR^d$. The Fourier dimension of a set is always smaller that its Hausdorff dimension, and a set is \emph{Salem} if it's Fourier dimension and Hausdorff dimension coincide. This paper investigates the Fourier dimension of submanifolds of Euclidean space.

Among sets of a given Hausdorff dimension, Salem sets exhibit subtle structural properties, reflecting their lack of correlation with plane waves. %In particular, Salem sets often lack `arithmetic structure', and have properties which might lead us to think of the set as being `curved' in some sense.
The Fourier dimension of hypersurfaces is often characterized by the curvature of the surface. For instance, hyperplanes have Fourier dimension zero, whereas hypersurfaces with non-vanishing Gauss curvature are Salem sets. Between these two extremes lie manifolds of higher codimension, where curvature is not the only property determining Fourier decay. The connection between geometric properties and Fourier decay then becomes quite delicate. %See \cite{Ekstrom} for a survey of results on the relations between Fourier dimension and geometry.

While the Fourier transform of smooth densities on manifolds can be analyzed using the well-developed body of techniques for the analysis of oscillatory integrals, such analysis does not preclude the existence of singular measures that may have faster Fourier decay than any smooth density. This possibility is difficult to eliminate, which makes obtaining upper bounds on the Fourier dimension of manifolds quite difficult.

Nonetheless, the existence of singular measures on a manifold with faster Fourier decay than smooth densities would have significant analytic consequences. In Fourier restriction theory, such decay properties would give rise to stronger Stein-Tomas type restriction estimates \cite{BakSeeger,Mitsis,Mockenhaupt}. In number theory, such decay would show that lattice points distribute non-homogeneously on the manifold \cite{Hlawka1,Hlawka2}. And in spectral theory, such measures would give rise fractal uncertainty principles on their support \cite{BourgainDyatlov,Dyatlov}. So identifying whether such measures could exist is of interest in analysis.

The main result of this paper is a simple geometric condition that characterizes when a smooth submanifold of Euclidean space is Salem. It implies that if a manifold is Salem, then smooth volume densities always have the fastest Fourier decay among all measures supported on the manifold.

% and in Section \ref{sec:lower_bounds_for_the_codimension_threshold} we will see why they emerge in the analysis of Fourier decay of measures supported on manifolds.

% the smallest integer such that any collection of $\rho(n)$ vector fields on $S^{n-1}$ must be linearly dependent at some point. J.F. Adams \cite{Adams}, in an early application of $K$-theory, showed that One may view Adam's result as a generalization of Poincar\'{e} and Brouwer's hairy ball theorem, since for odd $n$, $\rho(n) = 1$, reflecting the fact that any vector field on $S^{n-1}$ must vanish at some point. The larger growth $2^c$ for $0 \leq c < 3$ may be viewed as a manifestation of the existence of the division algebras $\CC$, $\mathbf{H}$, and $\mathbf{O}$ in two, four, and eight dimensions. 

\begin{theorem}\label{thm:blehtheorem}
    Let $M$ be a smooth, $m$-dimensional submanifold of Euclidean space. Then $M$ is a Salem set if and only if there exists a point $z_0 \in M$ so that for each $\zeta_0 \in (T_{z_0} M)^\perp$, the function $z \mapsto \zeta_0 \cdot (z - z_0)$ on $M$ has a non-degenerate critical point at $z_0$. In particular, if no point in $M$ has this property then it can have Fourier dimension no greater than $m - 1/3$.
\end{theorem}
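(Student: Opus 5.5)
The proof splits into the two directions. In both, the key object is the family of phase functions $\phi_\zeta(z) = \zeta \cdot z$ on $M$.

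\emph{Sufficiency.} Suppose $z_0$ has the stated property. Write $M$ near $z_0$ as a graph $z = (x, h(x))$ with $x \in \RR^m$, $h(0)=0$ and $\nabla h(0)=0$. The condition then says that for every unit $\zeta_0 = (0,\eta)$ the Hessian $\sum_j \eta_j \nabla^2 h_j(0)$ is nonsingular. By compactness of the unit sphere of normals and continuity, this persists: for $z$ in a small neighborhood $U$ of $z_0$ and every normal direction at $z$, the corresponding Hessian has determinant bounded below. Take $\mu = \psi\, d\sigma$ with $\psi$ a smooth bump supported in $U$, and fix a frequency $\zeta$.
\begin{itemize}
\item If $\zeta$ makes a non-negligible angle with every normal space $(T_zM)^\perp$, $z \in U$, then $\nabla_x(\zeta\cdot z)$ is comparable to $|\zeta|$ on the support, and non-stationary phase gives rapid decay.
\item Otherwise every critical point of the phase is nondegenerate with Hessian of size about $|\zeta|$. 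Stationary phase in $m$ variables then gives $|\widehat{\mu}(\zeta)| \lesssim |\zeta|^{-m/2}$.
\end{itemize}
This is uniform in $\zeta$, so $\fdim M \ge m$. Since the Hausdorff dimension is $m$, $M$ is Salem.

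\emph{Necessity.} Suppose no point has the property. Then at every $z \in M$ there is a unit normal $\eta(z)$ such that the Hessian $H(z,\eta(z)) = \sum \eta_j \nabla^2 h_j$ is degenerate. Suppose $\mu$ is a measure on $M$ with $|\widehat\mu(\zeta)| \lesssim |\zeta|^{-s/2}$; we want to show $s \le m - 1/3$. Since $\mu$ gives positive mass to some compact coordinate patch, it suffices to work locally. I would test $\mu$ against an anisotropic wave packet adapted to a degenerate direction.

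Fix scale $R$. Near a point $z$, consider the set of frequencies $\zeta = \lambda\eta$ with $\lambda \sim R$ and $\eta$ near $\eta(z)$. Let $v$ be a kernel direction of $H(z,\eta(z))$. The phase $\zeta\cdot z'$ then behaves, in $x$-coordinates centered at $z$, like
\[
R\left(\tfrac12 \langle H x, x\rangle + O(|x|^3)\right).
\]
In directions transverse to $v$ the phase is stationary only on a ball of radius $R^{-1/2}$. Along $v$ the quadratic term vanishes, so the phase stays roughly constant on a length $R^{-1/3}$. Hence the stationary set is a box $B_z$ of volume about $R^{-(m-1)/2}R^{-1/3}$.

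The argument is a covering/duality one. Let $\chi$ be a Schwartz function whose Fourier transform is localized to frequencies $\lambda\eta$ of the above type. Pairing $\mu$ with the modulated bump $e^{iR\eta\cdot x}\chi(\cdot)$ and integrating the decay hypothesis over the relevant frequency region bounds $\mu(B_z)$ by roughly
\[
R^{-s/2}\cdot\big(\text{frequency volume}\big),
\]
where the frequency volume is $R^{n-m}$ in the normal directions times $R^{(m-1)/2}R^{1/3}$ in the dual box. Summing over a cover of the patch by about $R^{(m-1)/2+1/3}$ such boxes, using a measurable choice of $\eta(z)$ and a Vitali-type selection, and comparing with the total mass $\mu(\text{patch})>0$ forces $s \le m-1/3$ as $R\to\infty$. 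The choice of $\eta(z)$ is made on a set of positive $\mu$-measure by pigeonholing over a finite net of directions.

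Some bookkeeping is needed here. The frequency region should be $O(1)$ wide in the normal directions, so that it is the box, and not the frequency volume, that appears. Concretely I would show
\[
\int_{\text{region}} |\widehat\mu|^2 \gtrsim \mu(B_z)^2 \cdot \big(\text{dual volume}\big)
\]
via a Plancherel-type argument. This gives the lower bound $\mu(B_z) \lesssim R^{-s/2}\,|B_z|^{-1/2}\cdots$. Combining it with $\sum_z \mu(B_z) \gtrsim 1$ yields the exponent comparison, and the exponents should work out to $m - 1/3$.

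\emph{Main obstacle.} I expect the hardest step to be making the degenerate-direction analysis uniform. The kernel direction $v$ and the normal $\eta(z)$ can vary discontinuously, and the cubic term might also vanish. That last case only helps, since the box becomes longer, but it must not spoil the stationary-set estimate. I would handle this by taking a finite $\delta$-net of normals and stratifying $M$ according to the rank of the Hessian. On the top stratum, where some $\eta$ gives rank exactly $m-1$ generically, things are locally smooth. The worst case is the uniform $1/3$ coming from a cubic: higher vanishing gives an even longer box, hence a better bound.
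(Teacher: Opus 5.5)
Your sufficiency argument matches the paper's (Lemma \ref{Lemma:OscillatoryIntegralNondegenerateLemma} and Proposition \ref{prop:SalemManifoldIfNondgenerate}). The necessity direction, however, has a genuine gap exactly where you place the main obstacle: the covering.

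\textbf{The covering step.} For a single $z$, your box $B_z$ (length $R^{-1/3}$ along $v(z)$, width $R^{-1/2}$ in the other tangent directions) is indeed stationary. But its orientation is dictated by $(\eta(z),v(z))$, and none of your proposed devices turns these boxes into a cover of the patch by $O(R^{(m-1)/2+1/3})$ of them.
\begin{itemize}
\item Vitali fails for thin boxes of varying orientation. A point whose box meets a selected $B_{z_i}$ can sit at distance $\sim R^{-1/3}$ from $z_i$ in a direction where $B_{z_i}$ has width $R^{-1/2}$, so it lies in no $O(1)$-dilate of $B_{z_i}$.
\item Pigeonholing over a net of directions loses a power of $R$. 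For a box to stay stationary at scale $R$, the normal must be accurate to about $R^{-1/3}$ and the tangent direction to about $R^{-1/6}$. The net therefore has $R^{c}$ elements with $c>0$, and the loss spoils the exponent.
\item Stratifying by rank does not give a continuous choice of $\eta(z)$, even on the top stratum, since degenerate normals can appear and disappear as $z$ moves.
\end{itemize}

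\textbf{The missing idea: no selection is needed.} Cover the patch by $O(R^{m/3})$ balls of radius $R^{-1/3}$. Pick a degenerate pair $(\zeta_B,v_B)$ \emph{only at the centre} $z_B$ of each ball. For every other $z_0$ in the ball, use the normal $\zeta_0\in(T_{z_0}M)^\perp$ and tangent $v_0\in T_{z_0}M$ projecting onto $\zeta_B$ and $v_B$. Since $M$ is $C^3$, the Hessian of $\Delta(x)=\zeta_0\cdot(L_{z_0}(x)-z_0)$ stays within $O(R^{-1/3})$ of a form annihilating $v_0$ throughout the ball. Taylor's theorem then gives
\[
|\Delta(tv_0+w)|\lesssim |w|^2+R^{-1/3}(t^2+|w|^2)\lesssim R^{-1}
\quad\text{for } |t|\lesssim R^{-1/3},\ |w|\lesssim R^{-1/2}.
\]
So all slabs within a ball are parallel, and $O(R^{(m-1)/6})$ of them cover it. The total count is $R^{m/3+(m-1)/6}=R^{(m-1)/2+1/3}$. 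This is the true source of the $1/3$: the ball radius $r$ must satisfy $r\cdot r^2=R^{-1}$. The cubic term is never used, only a $C^3$ (indeed $C^{2,\alpha}$) bound, so its possible vanishing is irrelevant.

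\textbf{The per-slab bound is misstated.} What the count needs, and what the modulated bump actually yields (Lemma \ref{Lemmaoaijdwoiwajdioaw}), is $\mu(\Theta)\lesssim R^{-s/2}$ with no volume factor. Both $R^{-s/2}\cdot(\text{frequency volume})$ and $R^{-s/2}|B_z|^{-1/2}$ would wreck the exponent count.

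The mechanism you leave implicit is positivity. Take $\chi\ge0$ with $\chi\ge1$ on $\Theta$ and normal thickness $R^{\varepsilon-1}\gg R^{-1}$. Then $\widehat\chi$, recentred at $R\zeta_0$, has $L^1$ norm $O(1)$ and is $O(R^{-N})$ on $\{|\zeta|\le R/10\}$. Because $\mu$ lives on $M$, and $M$ is a graph over $T_{z_0}M$ with $|\zeta_0\cdot(z-z_0)|\le 1/R$ on all of $M\cap\text{supp}\,\chi$ (not merely on $\Theta$), one gets
\[
\text{Re}\int e^{-iR\zeta_0\cdot(z-z_0)}\chi(z)\,d\mu(z)\ge\mu(\Theta)/2 .
\]
By Fourier inversion the left side is $O(R^{-N})+O(1)\cdot\sup_{|\zeta|\ge R/10}|\widehat\mu(\zeta)|\lesssim R^{-s/2}$.

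Your $L^2$/Plancherel variant can be made to give the same bound, but only through this same phase coherence on the support of the test function. It also requires the test function to be thicker than $R^{-1}$ in the normal direction, so that its Fourier side avoids the low frequencies where $\widehat\mu$ is not small.
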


\begin{remark} \label{remark:aiodjwaoidjawoidjawoiej12oi34j12io312}
    The upper bound in Theorem \ref{thm:blehtheorem}  is sharp, as one may verify that the graph of the function $\phi(x,t) = |x|^2 + t^3$ on $\RR^{m-1} \times \RR$ is an $m$-dimensional hypersurface of $\RR^{m+1}$ with Fourier dimension $m - 1/3$. See Proposition \ref{prop:ajwidhwaiudjaiouwdjwaqoij12oi321label} for details. 
\end{remark}

\begin{remark}
    In Section \ref{sec:required_smoothness_in_main_results}, we see that standard methods for analyzing oscillatory integrals justify that if the geometric condition in Theorem \ref{thm:blehtheorem} is satisfied, then appropriately localized smooth volume densities $\mu$ on the manifold $M$ satisfy $|\widehat{\mu}(\xi)| \lesssim |\xi|^{\varepsilon-m/2}$ for all $\varepsilon > 0$. The main difficulty in the proof of Theorem \ref{thm:blehtheorem} is instead justifying that manifolds which are Salem sets must satisfy the given geometric condition.
\end{remark}

%
% phi(x,y) = x^T A x + y^3
%  
%
% int e^{2 pi i ( [xi * x + zeta Q(x) ] + [eta y + zeta y^3]} a(x) a(y)
%   Integrate in x, get O( max(xi,zeta)^{-(m-1)/2} )
%   Integrate in y, get O( max(eta,zeta)^{-1/3} )
%   

As a result, for each $m$ we determine explicitly the largest integer $k$ for which there exists an $m$-dimensional smooth submanifold in $\RR^{m + k}$ that is a Salem set. It turns out that these quantities can be expressed precisely by the \emph{Radon-Hurwitz numbers}. For each $m \geq 1$, let $\rho(m)$ denote the $m$th Radon-Hurwitz number. If $2^{c + 4d}$ is the largest power of two dividing $m$, where $0 \leq c < 3$, then $\rho(m)$ is defined to be equal to $2^c + 8d$. Such numbers famously arise in the classification of division algebras and the problem of determining the study of the maximal number of linearly independent vector fields on $S^{m-1}$. It is interesting that these quantities also determine precisely the threshold for the codimension of submanifolds which are Salem sets. To simplify the statement of the main result of the paper, we will also define $\rho(a) = 0$ if $a \not \in \NN$, though this is non-standard.

\begin{theorem} \label{thm:mainTheorem}
    If $m$ and $k$ are positive integers, then there exists an $m$-dimensional smooth submanifold of $\RR^{m+k}$ which is a Salem set if and only if $k \leq \rho(m/2) + 1$. If $k > \rho(m/2 + 1)$, then any smooth $m$-dimensional submanifold of $\RR^{m+k}$ has Fourier dimension at most $m - 1/3$.
\end{theorem}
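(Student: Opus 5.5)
The plan is to reduce Theorem~\ref{thm:mainTheorem} to Theorem~\ref{thm:blehtheorem}, which turns the question into a purely algebraic one about linear spaces of symmetric matrices. Fix $z_0 \in M$ and write $M$ near $z_0$, after a rigid motion, as a graph $\{(x, \Phi(x))\}$ over $T_{z_0}M \cong \RR^m$ with $\Phi(0) = 0$ and $\nabla \Phi(0) = 0$. Write $\Phi = (\Phi_1, \dots, \Phi_k)$. For $\zeta_0 \in (T_{z_0}M)^\perp \cong \RR^k$, the function $z \mapsto \zeta_0 \cdot (z - z_0)$ becomes $x \mapsto \sum_i \zeta_{0,i} \Phi_i(x)$. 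It has a critical point at $0$, whose Hessian is $\sum_i \zeta_{0,i} A_i$, where $A_i = \nabla^2 \Phi_i(0)$. Thus the condition in Theorem~\ref{thm:blehtheorem} at $z_0$ says exactly this: the span $V = \mathrm{span}(A_1, \dots, A_k)$ is a $k$-dimensional space of real symmetric $m \times m$ matrices (the $A_i$ must be independent), and every nonzero element of $V$ is nonsingular. So Theorem~\ref{thm:mainTheorem} reduces to the following algebraic claim: the maximal dimension of such a space is $\rho(m/2)+1$, which for odd $m$ equals $1$ under the convention $\rho(a) = 0$ for $a \notin \NN$. Once this is shown, the final assertion (which should read $k > \rho(m/2)+1$) follows from the last sentence of Theorem~\ref{thm:blehtheorem}, since then no point of $M$ satisfies the condition.

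For the construction, when $m$ is odd I take a hypersurface with nonvanishing Gauss curvature ($k=1$). When $m = 2n$, let $J_1, \dots, J_{\rho(n)-1}$ be the Hurwitz--Radon family of $n \times n$ real matrices, which are skew, orthogonal, and pairwise anticommuting. I then set
\[ A_0 = \begin{pmatrix} I & 0 \\ 0 & -I \end{pmatrix}, \quad A_1' = \begin{pmatrix} 0 & I \\ I & 0 \end{pmatrix}, \quad B_j = \begin{pmatrix} 0 & J_j \\ -J_j & 0 \end{pmatrix}. \]
These are $\rho(n)+1$ symmetric matrices that pairwise anticommute and square to $I$. Hence $(\sum c_i A_i)^2 = |c|^2 I$, so every nonzero combination is invertible. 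The quadric $\{(x, x^TA_0x, x^TA_1'x, x^TB_1x, \dots)\} \subset \RR^{m+k}$ then satisfies the condition at the origin, and it is Salem by Theorem~\ref{thm:blehtheorem}. Any smaller $k$ is handled by dropping matrices.

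For the upper bound, the odd case is elementary. Suppose $A, B \in V$ are independent. If $A$ is singular we are already done. Otherwise $\det(A + tB) = \det(A)\det(I + tA^{-1}B)$ is a real polynomial of degree at most $m$. If $A^{-1}B$ has a nonzero real eigenvalue $\lambda$, then $t = -1/\lambda$ gives a singular element; since $m$ is odd and $A^{-1}B$ is real, some eigenvalue is real, and it is nonzero unless $B$ is singular. So $k \le 1$. The even case is the main obstacle. Here I would invoke the theorem of Adams, Lax and Phillips, which says that the maximal dimension of a space of real symmetric $2n \times 2n$ matrices with all nonzero elements invertible is $\rho(n)+1$.

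If I wanted to sketch that argument rather than cite it, I would proceed as follows. Normalize $A_1$ to $\mathrm{diag}(I_p, -I_q)$. Nonsingularity along the whole real projective space of $V$ means the index of $\sum c_i A_i$ is constant on the sphere. Since $-A$ has the complementary index, this forces $p = q = n$. The positive eigenbundle then gives maps which, via Adams' vector fields theorem, produce $k-2$ linearly independent tangent vector fields on $S^{n-1}$. Hence $k-2 \le \rho(n)-1$.
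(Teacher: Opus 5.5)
Your proposal is correct and follows essentially the same route as the paper: the nondegenerate-Hessian criterion of Theorem~\ref{thm:blehtheorem} (that is, Propositions~\ref{prop:SalemManifoldIfNondgenerate} and~\ref{lemma:UpperBoundFourierDimensionProposition}) reduces the statement to the Adams--Lax--Phillips result that $\rho(m/2)+1$ is the maximal dimension of a space of symmetric $m\times m$ matrices whose nonzero elements are all nonsingular. The differences are minor: you build the extremal space explicitly from a Hurwitz--Radon family, whereas the paper cites Adams--Lax--Phillips for existence too, and you are right that the last assertion should read $k > \rho(m/2)+1$, which is what the paper's argument (Lemma~\ref{lemma:StationaryDegenerateHighCodimensionLemma}) actually uses.
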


In Figure \ref{figureTable}, we provide a table explicitly computing for each $m$ the largest Euclidean space $\RR^d$ which contains an $m$-dimensional submanifold that is a Salem set. If $a$ is the largest power of $2$ dividing $m$, then one may check explicitly that
\begin{equation} \label{diowjdoiqwjoijo4i14123}
    \rho(m/2) + 1 \leq 2 \lg(a/2) + 3 \leq 2 \lg(m/2) + 3.
\end{equation}
The upper bound $2 \lg(m/2) + 3$ in \eqref{diowjdoiqwjoijo4i14123} is achieved precisely when $m$ is a power of $16$. When $m$ is any other power of two, $\rho(m/2) + 1 = 2 \lg(m/2) + 1$, which is still close to achieving the upper bound in \eqref{diowjdoiqwjoijo4i14123}. However, for a typical $m$ the maximal co-dimension is much smaller than this upper bound, unless $m$ is largely a power of $2$. In particular, when $m$ is odd, hypersurfaces are the only $m$-dimensional submanifolds which are Salem sets.

\begin{figure}
\def\arraystretch{1.3}
\begin{tabular}{ | c | c c c c c c c c | }
    \hline
    $m$ & $1$ & $2$ & $3$ & $4$ & $5$ & $6$ & $7$ & $8$\\ % & $9$ & $10$ & $11$ & $12$ & $13$ & $14$ & $15$ & $16$\\
    \hline
    $\RR^d$ & $\RR^2$ & $\RR^4$ & $\RR^4$ & $\RR^7$ & $\RR^6$ & $\RR^8$ & $\RR^8$ & $\RR^{13}$\\ % & $\RR^{10}$ & $\RR^{12}$ & $\RR^{12}$ & $\RR^{15}$ & $\RR^{14}$ & $\RR^{16}$ & $\RR^{16}$ & $\RR^{25}$\\
    \hline
\end{tabular}
\caption{For $m \in \{ 1, \dots, 8 \}$, this table lists the largest ambient space $\RR^d$ containing an $m$-dimensional submanifold which is a Salem set.} \label{figureTable}
\end{figure}

The proof of Theorem \ref{thm:mainTheorem} is geometric in nature. We show that a finite measure $\mu$ supported on a manifold can exhibit large Fourier decay only if it assigns small measure to certain tubes covering the manifold. By covering the manifold with such tubes, we deduce that $\mu = 0$. The argument is robust and may extend to yield sharp upper bounds for the Fourier dimensions of other classes of submanifolds.

We provide two results that provide evidence that sharp bounds can be obtained with this method. We begin by applying the method to obtain a simple proof of a theorem of Junjie Zhu \cite{ZhuRank}, who obtained sharp Fourier dimension bounds for \emph{hypersurfaces of constant nullity}, that is, hypersurfaces in Euclidean space with a fixed number of non-vanishing curvatures at each point. This built upon the techniques of earlier work in \cite{ZhuConic} for analyzing conic and cylindrical hypersurfaces, further inspired by work of Jonathan Fraser, Terence Harris, and Nicholas Kroon \cite{FraserHarrisKroon}, who showed the light cone in $\RR^{n+1}$ has Fourier dimension $n-1$.

\begin{restatable}{theorem}{thmzhutheorem} \label{thm:zhutheorem}
  Suppose $M$ is a smooth hypersurface in $\RR^{n+1}$, and exactly $k$ principal curvatures vanish at each point of $M$. Then $\fordim(M) = n - k$.
\end{restatable}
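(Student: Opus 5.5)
We prove the lower bound $\fordim(M) \geq n-k$ and the upper bound $\fordim(M) \leq n-k$ separately. The lower bound is the routine half.

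\emph{Lower bound.} We localize to a small coordinate patch and write $M$ as a graph $x_{n+1} = \phi(x)$ over $\RR^n$. Since exactly $k$ principal curvatures vanish at every point, the Hessian $D^2\phi$ has constant rank $n-k$. Constant nullity hypersurfaces are ruled: the relative nullity distribution is integrable, and its leaves are open pieces of $k$-dimensional affine subspaces along which the tangent plane is constant. We therefore parametrize the patch as $(u,v) \mapsto \gamma(u) + \sum_j v_j e_j(u)$ with $u \in \RR^{n-k}$, $v \in \RR^k$, where the Gauss map depends only on $u$ and has rank $n-k$.

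For a smooth bump density $\mu$ on this patch, we compute $\widehat{\mu}(\zeta)$ and split according to the size of $\zeta$. If $\zeta$ is far from every normal direction, the phase has no critical points, and integration by parts gives rapid decay. Near normal directions, we integrate first in $u$ by stationary phase. The phase restricted to the $u$-variables has nondegenerate Hessian of rank $n-k$ uniformly in $v$, which gives decay $|\zeta|^{-(n-k)/2}$ uniformly. This yields $\fordim(M) \geq n-k$.

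\emph{Upper bound.} We follow the tube-covering strategy described in the introduction. Suppose $\mu$ is a nonzero finite measure supported on $M$ with $|\widehat{\mu}(\zeta)| \lesssim |\zeta|^{-s/2}$ for some $s > n-k$; we aim for a contradiction. Fix a point $z_0 \in M$ with normal $\nu_0$. The $k$-plane $L$ of relative nullity through $z_0$ lies in $M$, and the tangent plane is constant along it. Hence, in the $(u,v)$ coordinates, the height function $z \mapsto \nu_0\cdot(z-z_0)$ vanishes to second order along all of $L$ and is quadratic only in the $n-k$ transverse directions.

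Consequently, near $z_0$ the manifold $M$ lies in a slab-like region: points of $M$ within distance $\delta$ of $z_0$ in the transverse directions and within distance $O(1)$ along $L$ lie within $O(\delta^2)$ of the hyperplane $z_0 + T_{z_0}M$. More precisely, the set $M \cap T_\delta$, where $T_\delta$ is a $\delta$-neighborhood of $L$ in the transverse tangent directions, is contained in a box with $k$ sides of length $\sim 1$, $n-k$ sides of length $\sim\delta$, and one side of length $\sim\delta^2$ in the normal direction.

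We test $\mu$ against a Schwartz function $\psi_\delta$ adapted to this box, with $\psi_\delta \gtrsim 1$ on the box. By Plancherel,
\[
\mu(T_\delta) \lesssim \int |\widehat{\mu}(\zeta)|\,|\widehat{\psi_\delta}(\zeta)|\,d\zeta.
\]
Here $\widehat{\psi_\delta}$ has $L^1$ mass $O(1)$ and is concentrated on the dual box of dimensions $1^k \times \delta^{-(n-k)} \times \delta^{-2}$. The part of that dual box near the origin contributes the trivial term, and we must remove it. To do so, we use a modulated version, or we compare two nested boxes, so that the test function has mean zero and $\widehat{\psi_\delta}$ essentially vanishes for $|\zeta| \lesssim \delta^{-1}$.

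With that arrangement, the decay hypothesis gives $\mu(T_\delta) \lesssim \delta^{s/2 \cdot \theta}$ for an appropriate exponent $\theta$. The key counting is that the dual box has $\zeta$ of size up to $\delta^{-2}$ over a region of volume $\delta^{-(n-k)-2}$, weighted by the normalization $\delta^{n-k+2}$. We then cover a compact piece of $M$ by about $\delta^{-(n-k)}$ such tubes $T_\delta$, since the tubes are thin only in the $n-k$ transverse directions. Summing gives
\[
\mu(K) \lesssim \delta^{-(n-k)}\, \delta^{\,s - \epsilon} \to 0 \quad \text{when } s > n-k,
\]
so $\mu = 0$, a contradiction.

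\emph{Main obstacle.} The difficult step is the estimate $\mu(T_\delta) \lesssim \delta^{s-\epsilon}$. It requires a careful choice of test function whose Fourier transform avoids low frequencies while still dominating the indicator of the tube. It also requires the curvature in the transverse directions, via the $\delta^2$ thickness, to convert decay of order $|\zeta|^{-s/2}$ at frequencies $\sim \delta^{-2}$ into the gain $\delta^{s}$. I would first try the difference $\psi_\delta - \psi_{C\delta}$ and bound the low-frequency error using that $\mu$ is finite, possibly iterating over dyadic scales.
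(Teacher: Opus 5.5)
Your lower bound is the classical Littman argument, which the paper only cites, and it is fine. Your upper bound has the same architecture as the paper's Proposition \ref{prop:oaiwjdioawjdioawojidaoiwjdiaowdjioalabel}: $\delta^2$-flatness of $M$ along the ruling plane through $z_0$, a per-slab bound $\mu(T_\delta)\lesssim\delta^{s}$ (the paper's $\mu(\Theta)\lesssim R^{-s/2}$ with $R=\delta^{-2}$), and a cover by $O(\delta^{-(n-k)})$ slabs.

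\textbf{The gap.} The per-slab bound is the entire content of the upper bound. You leave it unproved, and the mechanisms you suggest do not give it.

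Asking that $\widehat{\psi_\delta}$ vanish for $|\zeta|\lesssim\delta^{-1}$ is not enough. Write $\tau=\nu_0\cdot\zeta$. Consider the part of the dual box where the transverse tangential components have size $\sim\delta^{-1}$ and $|\tau|\lesssim\delta^{-1}$. There a bump adapted to the box generically has $|\widehat{\psi_\delta}|\sim\delta^{n-k+2}$, the region has volume $\sim\delta^{-(n-k)-1}$, and the decay hypothesis only gives $|\widehat{\mu}|\lesssim\delta^{s/2}$. So your bound picks up $\delta^{1+s/2}$, which is larger than $\delta^{s}$ once $s>2$. Summed over $\delta^{-(n-k)}$ slabs, this yields only $\fordim(M)\le 2(n-k)-2$, short of $n-k$ when $n-k\ge 3$. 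This is exactly the Ess\'een limitation described in the remark after Lemma \ref{Lemmaoaijdwoiwajdioaw}.

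What must be made negligible is the whole frequency slab $\{|\tau|\le c\delta^{-2}\}$, not a ball. The nested difference $\psi_\delta-c\psi_{C\delta}$ does not do this:
its transform vanishes only at $\zeta=0$ and is generically still of size $\delta^{n-k+2}$ on the region above. It is also negative on $T_{C\delta}\setminus T_\delta$, so it does not dominate the indicator of the tube. Iterating over dyadic scales reproduces the same loss at every scale.

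\textbf{The missing idea} is the content of Lemma \ref{Lemmaoaijdwoiwajdioaw}. Take $R=\delta^{-2}$ and pair $\mu$ with $\psi(z)=e^{-iR\nu_0\cdot(z-z_0)}\chi(z)$. Here $\chi$ is a bump adapted to the box but \emph{thickened} to width $R^{\epsilon}/R$ in the normal direction.
\begin{itemize}
\item Then $\widehat{\psi}$ is a translate, by a frequency of size $\sim R$ in the $\nu_0$ direction, of $\widehat{\chi}$, which is concentrated where $|\tau|\lesssim R^{1-\epsilon}$.
\item Hence $\int_{\{|\tau|\le cR\}}|\widehat{\psi}|\lesssim_N R^{-N}$ while $\|\widehat{\psi}\|_{L^1}\lesssim 1$.
\item H\"older then gives $|\int\psi\,d\mu|\lesssim R^{-N}+R^{-s/2}$, with no dual-box counting at all.
\item Without the thickening, the modulated transform would be spread over a region of width $\sim R$ and would not separate from $\tau=0$.
\end{itemize}

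Such an oscillating $\psi$ still controls $\mu(T_\delta)$ only because of the geometry. Near $z_0$, $M$ is a graph over its tangent plane. So every point of $M\cap\mathrm{supp}\,\chi$, however thick $\chi$ is in the normal direction, lies within $1/R$ of $z_0+T_{z_0}M$. Hence $\mathrm{Re}\,e^{-iR\nu_0\cdot(z-z_0)}\ge\cos 1\ge 1/2$ on $\mathrm{supp}\,\mu\cap\mathrm{supp}\,\chi$, and $\mathrm{Re}\int\psi\,d\mu\ge\mu(T_\delta)/2$.

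In your proposal the $\delta^2$-flatness is used only to define the box. It has to be used instead to guarantee positivity of a test function that oscillates at normal frequency $\delta^{-2}$.
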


The methods of \cite{FraserHarrisKroon}, \cite{ZhuConic} and \cite{ZhuRank} rely on an `affine smoothing strategy'. The idea is to exploit the symmetries of the underlying manifold to reduce the study of the Fourier decay of singular measures to the Fourier decay of smooth measures, where one can use standard oscillatory integral techniques. In contrast, our argument directly exploits the geometry of hypersurfaces with a fixed number of vanishing principle curvatures, i.e. that they can be locally foliated by $n-k$ dimensional hyperplanes. This yields a substantially simpler proof.

Our methods also yield sharp bounds on the Fourier dimensions of nondegenerate curves. Recall that a curve parameterized by a function $\gamma: I \to \RR^n$ is non-degenerate if at each $t_0 \in I$, the vectors $\partial_t \gamma(t), \dots, \partial_t^n \gamma(t_0)$ are linearly independent. This class of curves possesses fewer structural properties that the class of surfaces covered in Theorem \ref{thm:zhutheorem}.

\begin{theorem} \label{momentcurvetheorem}
    If $C \subset \RR^n$ is a smooth, non-degenerate curve, $\fordim(C) = 2/n$.
\end{theorem}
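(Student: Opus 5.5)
We prove the two inequalities $\fordim(C) \geq 2/n$ and $\fordim(C) \leq 2/n$ separately. Since the Fourier dimension is monotone under inclusion and we may localize, it suffices to work near a single point $t_0$ with $\gamma$ defined on a small interval $I$.

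\textbf{Lower bound.} Take $\mu = \gamma_*(a\,dt)$ for a smooth bump $a$ supported in $I$, so that $\widehat{\mu}(\zeta) = \int e^{-2\pi i \zeta\cdot\gamma(t)} a(t)\,dt$. Nondegeneracy gives, on a small interval, a uniform bound $\sum_{j=1}^n |\zeta\cdot \partial_t^j\gamma(t)| \gtrsim |\zeta|$. The van der Corput lemma, applied with a partition of $I$ according to which derivative is large, then gives $|\widehat{\mu}(\zeta)| \lesssim |\zeta|^{-1/n}$. This is exactly the bound $|\zeta|^{-s/2}$ with $s = 2/n$.

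\textbf{Upper bound via tubes.} Suppose $\mu$ is a finite measure on $C$ with $|\widehat{\mu}(\zeta)| \lesssim |\zeta|^{-s/2}$ for some $s > 2/n$. We aim to show $\mu = 0$. Fix a parameter $t_1$ and the Frenet-type frame $e_1,\dots,e_n$ at $\gamma(t_1)$ obtained by Gram–Schmidt on the derivatives $\partial_t^j\gamma(t_1)$. In these coordinates,
\[
\gamma(t_1+h) - \gamma(t_1) \approx (c_1 h, c_2 h^2, \dots, c_n h^n).
\]
Consequently the arc $\{|h| \leq \delta\}$ lies in the anisotropic box
\[
R_\delta = \{ |x_j| \lesssim \delta^{j} \},
\]
whose volume is $\delta^{n(n+1)/2}$. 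Let $\psi$ be a Schwartz function adapted to $R_\delta$, with $\psi \geq 1$ on $R_\delta$, whose Fourier transform is supported in the dual box $R_\delta^*$ of dimensions $\delta^{-1}\times\cdots\times\delta^{-n}$. Then
\[
\mu(R_\delta) \leq \int \psi\,d\mu = \int \widehat{\psi}(\zeta)\,\overline{\widehat{\mu}(\zeta)}\,d\zeta .
\]
We split this integral at $|\zeta| \sim 1$. The low-frequency part contributes about $|R_\delta|\,\mu(\mathbf{R}^n)$. For the high-frequency part we use the decay of $\widehat{\mu}$ together with $\|\widehat\psi\|_\infty \lesssim |R_\delta|$. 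The crude bound this gives is $\delta^{n(n+1)/2}\int_{R_\delta^*}|\zeta|^{-s/2}\,d\zeta$, and this is dominated by the longest direction $|\zeta|\sim \delta^{-n}$.

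This crude estimate is the main obstacle: it only yields roughly $\mu(R_\delta) \lesssim \delta^{ns/2}$ with leftover losses, so it must be done more carefully. I would refine it by dyadically decomposing $R_\delta^*$ and exploiting that the decay $|\zeta|^{-s/2}$ is radial while the box is very eccentric. The goal is the bound
\[
\mu(\gamma([t_1-\delta,t_1+\delta])) \lesssim \delta^{ns/2},
\]
up to $\delta^{-\varepsilon}$ losses. When $s > 2/n$ this exponent is $ns/2 > 1$.

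\textbf{Covering.} Cover $\gamma(I)$ by $\sim \delta^{-1}$ such arcs. This gives
\[
\mu(C) \lesssim \delta^{-1}\,\delta^{ns/2 - \varepsilon} \to 0 \quad \text{as } \delta \to 0,
\]
so $\mu = 0$. Hence no nonzero measure on $C$ has Fourier decay exponent exceeding $2/n$, which gives $\fordim(C) \leq 2/n$.

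If the direct frequency estimate proves too lossy, I would replace it with a cleaner device: pick a single frequency scale $\lambda = \delta^{-n}$ along the direction $e_n$ and test against modulated bumps $e^{2\pi i \lambda x_n}\phi$. Here $|\widehat{\mu}(\lambda e_n)|$ can be compared with the mass of $\mu$ on the stationary set of the phase, which is the short arc near $t_1$. This reduces the whole argument to the single pointwise bound $|\widehat\mu(\lambda e_n)| \lesssim \lambda^{-s/2}$.
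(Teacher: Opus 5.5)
Your proposal is correct, and the upper bound is already complete at the point where you stop trusting it: the ``crude'' Ess\'{e}en estimate has no leftover losses. Decay with exponent $s$ implies decay with every smaller exponent, so you may assume $2/n<s<2$. Then, using $|\zeta|\ge|\zeta_n|$ on the dual box,
\[
\delta^{\frac{n(n+1)}{2}}\int_{R_\delta^*}\min\bigl(1,|\zeta|^{-s/2}\bigr)\,d\zeta \lesssim \delta^{\frac{n(n+1)}{2}}\,\delta^{-\frac{n(n-1)}{2}}\int_{-\delta^{-n}}^{\delta^{-n}}\min\bigl(1,|t|^{-s/2}\bigr)\,dt \lesssim \delta^{n}\,\delta^{-n(1-s/2)}=\delta^{ns/2},
\]
with constants uniform in $t_1$. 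This gives $\mu(R_\delta)\lesssim\delta^{ns/2}$ exactly, and your covering step yields $\mu(C)\lesssim\delta^{ns/2-1}\to 0$. No dyadic refinement and no fallback is needed. You should take $\psi\ge 0$ (e.g.\ a rescaled $|\check{\theta}|^2$) so that $\mu(R_\delta)\le\int\psi\,d\mu$ is legitimate. Your lower bound is the paper's van der Corput argument.

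For the upper bound, your route genuinely differs from the paper's, which applies Lemma~\ref{Lemmaoaijdwoiwajdioaw}. There, $\mu$ is tested against a modulated bump $e^{iR\zeta_0\cdot(z-z_0)}\chi$, with $\zeta_0$ normal to the osculating hyperplane and $R=\delta^{-n}$. Positivity comes from the curve staying within $1/R$ of that hyperplane on the slab, and the modulation makes $\widehat{\eta}$ negligible for $|\zeta|\lesssim R$.

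You need neither the phase nor the transversality. Your dual box already has a side of length $\delta^{-n}=R$, and averaging $|\zeta|^{-s/2}$ along it gives $R^{-s/2}$ precisely when $s<2$. This is the regime, noted in the remark after Lemma~\ref{Lemmaoaijdwoiwajdioaw}, where Ess\'{e}en's inequality alone suffices ($s<2\dim V^\perp$ with $\dim V^\perp=1$). So for curves your proof is more elementary. The modulated lemma is only indispensable for Propositions~\ref{lemma:UpperBoundFourierDimensionProposition} and~\ref{prop:oaiwjdioawjdioawojidaoiwjdiaowdjioalabel}, where $s$ may exceed $2$.

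Your explicit box $\delta\times\delta^2\times\cdots\times\delta^n$ in the Frenet frame is also the right covering set. Read literally, the paper's tube is a $1/R$-neighbourhood of an interval in $T_{z_0}C$, and it does not contain the arc of length $R^{-1/n}$ once $n\ge 3$. The lemma should instead be applied with $V=\zeta_0^\perp$ and $Q$ given by the first $n-1$ sides of your box.

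Finally, your fallback is essentially the paper's method, but it does not reduce to the single value $\widehat{\mu}(\lambda e_n)$. One needs the decay on the whole frequency support of the modulated bump, which the uniform hypothesis supplies.
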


Note this result aligns with the results of Theorem \ref{thm:mainTheorem}, which guarantees that all curves in $\RR^n$ for $n \geq 3$ must have Fourier dimension at most $2/3$.

Finally, we remark that the only smooth diffeomorphisms of $\RR^d$ that preserve the geometric property in Theorem \ref{thm:mainTheorem} for all submanifolds $M$ are affine transformations. As a corollary, we conclude that the only smooth diffeomorphisms on $\RR^d$ for $d > 1$ that preserve Fourier dimension are the affine transformations. This contrasts strongly with the study of subsets of $\RR^1$, where it remains an open question if \emph{all} smooth diffeomorphisms preserve Fourier dimension \cite{Ekstrom}.

% TODO Discuss results of Kahane. Consider $\{ X_a : a \in \RR \}$ is the fractional Brownian field on $\RR^d$, satisfying the variance identity $\VV(X_a - X_b) = |a - b|^\gamma$, for $0 \leq \gamma \leq 2$. Then the Brownian field is almost surely H\"{o}lder continuous of order $\gamma/2 - \varepsilon$ for each $\varepsilon > 0$. If $E$ has Hausdorff dimension at most $\gamma (d/2)$, then $E_X = \{ X_a: a \in E \}$ is almost surely a Salem set. If the Hausdorff dimension of $E$ is greater than $\gamma (d/2)$, then $E_X$. If $M$ is a manifold, then $E_M$ is a $C^{\gamma/2 - \varepsilon}$ manifold? In TODO, Fredrik Ekstr\"{o}m showed that for Borel subset $E$ of the real line, there exists a $C^1$ diffeomorphism 

\section{Required Smoothness in Main Results} \label{sec:required_smoothness_in_main_results}

The results proved in this paper become slightly more convoluted to state if we quantify the required smoothness of the manifolds we are studying, but this may still be of interest to some readers.

\vspace{0.5em}

\textbf{Theorem \ref{thm:blehtheorem}}: When $m \in \{ 1,2,3,4 \}$, the proof of Theorem \ref{thm:blehtheorem} desribed here carries through when $M$ is assumed to be a $C^{2 + \alpha}$-submanifold for some $\alpha > 0$. When $m > 4$, the proof of Theorem \ref{thm:blehtheorem} carries through only when $M$ is a $C^N$-submanifold for some integer $N \geq \lceil m/2 \rceil$. The additional regularity when $m > 4$ is needed to prove the required lower bounds on the Fourier dimension of $M$, i.e. to integrate by parts sufficiently many times to get Fourier decay away from stationary points in the oscillatory integrals considered in Section \ref{sec:lower_bounds_for_the_codimension_threshold}.

\vspace{0.5em}

\textbf{Theorem \ref{thm:mainTheorem}}: The proof of Theorem \ref{thm:mainTheorem} described here shows that if $k > \rho (m/2 + 1)$, and $\alpha > 0$, then any $C^{2 + \alpha}$-submanifold of $\RR^{m+k}$ has Fourier dimension at most $m - 1/3$.

\vspace{0.5em}

\textbf{Theorem \ref{thm:zhutheorem}}: The proof of Theorem \ref{thm:zhutheorem} described here carries through when $M$ is assumed to be a $C^2$ manifold of constant nullity  and $n - k \in \{ 1,2,3,4 \}$. For $n - k > 4$, the proof shows $C^2$ manifolds of constant nullity have Fourier dimension at least $n - k$, but to obtain the lower bound we must assume $M$ is $C^N$ for $N \geq \lceil (n - k)/2 \rceil$. Again, the trouble is obtaining enough regularity to perform integration by parts, justifying the required lower bounds.

\vspace{0.5em}

\textbf{Theorem \ref{momentcurvetheorem}}: the proof of Theorem \ref{momentcurvetheorem} given here carries through for all nondegenerate $C^n$-curves in $\RR^n$. It is difficult to change this level of regularity significantly because the usual definition of a nondegenerate curve requires a curve to have at least $n$ derivatives at each point.

\vspace{0.5em}

We remark that if one reformulates Theorems \ref{thm:blehtheorem}, \ref{thm:mainTheorem}, and \ref{thm:zhutheorem} to only discuss Fourier decay in directions normal to the manifolds in study, then one need not use the principle of non-stationary phase at all in the proofs of lower bounds (all oscillatory integrals considered will have stationary points), and then Theorems \ref{thm:blehtheorem} and \ref{thm:mainTheorem} follow for all $C^{2 + \alpha}$-submanifolds, and Theorem \ref{thm:zhutheorem} holds for all $C^2$-submanifolds.

\section{Lower Bounds For The Codimension Threshold} \label{sec:lower_bounds_for_the_codimension_threshold}

In this section, we prove the required \emph{lower} bounds required in Theorem \ref{thm:mainTheorem}. That is, for $m \geq 1$ and $k = \rho(m/2) + 1$ we prove the existence of an $m$-dimensional submanifold of $\RR^{m+k}$ that is a Salem set. The process of proof also provides significant motivation as to why these lower bounds are also the relevant upper bounds, and leads to a discussion as to \emph{why} the Radon-Hurwitz numbers appear in  Theorem \ref{thm:mainTheorem}.

Let $M$ be an arbitrary $m$-dimensional smooth submanifold of $\RR^d$. It is a heuristic in harmonic analysis that Fourier decay and smoothness are closely intertwined. This leads us to conjecture that the optimal decay of measures supported on $M$ may be achieved by a smooth volume density on $M$, i.e. by what might be called a \emph{Knapp type example} in restriction theory. Working locally, we may write $M$ as the graph of a function $\phi: \RR^m \to \RR^k$. If $\mu$ is supported on $M$, then we may find a smooth, compactly supported function $a$ on $\RR^m$ so that
\begin{equation} \label{aiodjwaoidjawioje211}
    \widehat{\mu}(\zeta) = \int a(x) e^{2 \pi i (\xi \cdot x + \eta \cdot \phi(x))}\; dx \quad\text{where $\zeta = (\xi,\eta)$}.
\end{equation}
Conversely, for any $a \in C_c^\infty(\RR^m)$ we can find a finite Borel measure $\mu$ on $M$ such that $\widehat{\mu}$ is expressed as \eqref{aiodjwaoidjawioje211}. To study optimal Fourier decay we are thus lead to the analysis of a standard oscillatory integral with smooth amplitude and phase, and so we may employ the principles of stationary and non-stationary phase.

Naively, stationary phase tells us that if the Hessian of the phase is non-singular at any critical point of the phase, then we can obtain an estimate $|\widehat{\mu}(\zeta)| \lesssim |\zeta|^{-m/2}$, which is sufficient to conclude that $M$ has Fourier dimension $m$, and is thus a Salem set. For any point $x_0$ in the support of $a$, and any $\eta$, setting $\xi = - D\phi(x_0)^t \eta$ in \eqref{aiodjwaoidjawioje211} gives rise to an oscillatory integral with a critical point at $x_0$. Define
\begin{equation} \label{eq:HessDefinition}
    H(\phi,\eta,x) = \sum\nolimits_{i = 1}^k \eta_i \text{Hess}(\phi_i)(x_0),
\end{equation}
where $\text{Hess}(f)(x_0)$ denotes the Hessian of a function $f$ at a point $x_0$. Then the Hessian of the phase at the critical point $x_0$ above is a multiple of $H(\phi,\eta,x_0)$. In order to assume all critical points are non-degenerate, we are thus forced to make the assumption that for any $x_0$ in the support of $a$, and all non-zero $\eta \in \RR^k$, $H(\phi,\eta,x_0)$ is non-singular. Conversely, under these assumptions we obtain the required Fourier decay.

\begin{lemma} \label{Lemma:OscillatoryIntegralNondegenerateLemma}
    Let $\phi: \RR^m \to \RR^k$ be a smooth function, and $a \in C_c^\infty(\RR^m)$, and define
    \[ I(\xi,\eta) = \int a(x) e^{2 \pi i (\xi \cdot x + \eta \cdot \phi(x))}\; dx. \]
    Suppose that $H(\phi,\eta,x)$ is non-singular for all $x \in \text{supp}(a)$ and all $\eta \neq 0$, where $H$ is defined in \eqref{eq:HessDefinition}. Then
    \[ |I(\zeta)| \lesssim_\varepsilon |\zeta|^{\varepsilon - m/2} \quad\text{for all $\varepsilon > 0$ and all $\zeta = (\xi,\eta)$ in $\RR^{m+k}$}. \]
\end{lemma}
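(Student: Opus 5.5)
We split into two regimes according to the relative sizes of $\xi$ and $\eta$. Fix $C_0$ larger than $2\sup_{x \in \text{supp}(a)} \|D\phi(x)\|$. Write the phase as $\Phi_\zeta(x) = \xi \cdot x + \eta \cdot \phi(x)$, so that $\nabla \Phi_\zeta(x) = \xi + D\phi(x)^t \eta$. Note also that $\text{Hess}\,\Phi_\zeta(x) = H(\phi,\eta,x)$.

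\textbf{Non-stationary regime.} If $|\xi| \geq C_0 |\eta|$, then on $\text{supp}(a)$ we have $|\nabla \Phi_\zeta| \gtrsim |\xi| \gtrsim |\zeta|$. Integrating by parts $N$ times with the operator $L = (2\pi i)^{-1} |\nabla\Phi_\zeta|^{-2}\nabla\Phi_\zeta \cdot \nabla$ gives $|I(\zeta)| \lesssim_N |\zeta|^{-N}$. Each application produces derivatives of $\phi$ multiplied by $\eta$, divided by $|\nabla\Phi_\zeta|^2 \gtrsim |\zeta|^2$, so each step gains a factor $|\zeta|^{-1}$. Taking $N \ge m/2$ handles this case.

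\textbf{Stationary regime.} Now assume $|\xi| \le C_0 |\eta|$, so $|\eta| \sim |\zeta|$. Write $\eta = \lambda \omega$ with $\lambda = |\eta|$ and $\omega \in S^{k-1}$, and write $\xi = \lambda v$ with $|v| \le C_0$. Then $I(\zeta) = \int a(x) e^{2\pi i \lambda \psi_{v,\omega}(x)}\,dx$, where $\psi_{v,\omega}(x) = v\cdot x + \omega \cdot \phi(x)$.

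By hypothesis, $\text{Hess}\,\psi_{v,\omega}(x) = H(\phi,\omega,x)$ is nonsingular for every $x \in \text{supp}(a)$ and every $\omega \in S^{k-1}$. Compactness of $\text{supp}(a) \times S^{k-1}$ then gives a uniform lower bound $|\det H| \ge c > 0$ on a neighborhood $U$ of $\text{supp}(a)$, together with a uniform bound $\|H^{-1}\| \le C$. By the inverse function theorem with uniform constants, there is $r > 0$, independent of $(v,\omega)$, such that on any ball of radius $r$ in $U$ the map $x \mapsto \nabla\psi_{v,\omega}(x)$ is injective and $|\nabla\psi_{v,\omega}(x) - \nabla\psi_{v,\omega}(y)| \gtrsim |x-y|$.

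Cover $\text{supp}(a)$ by finitely many such balls and use a partition of unity to split $a = \sum_j a_j$. For each piece $a_j$, one of two things happens:
\begin{itemize}
\item $\psi_{v,\omega}$ has no critical point in $\text{supp}(a_j)$, or
\item $\psi_{v,\omega}$ has exactly one critical point $x_*$ there, and it is nondegenerate.
\end{itemize}

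In the second case we apply the standard stationary phase estimate with a nondegenerate critical point, uniformly in the parameters $(v,\omega)$. This gives $|I_j| \lesssim \lambda^{-m/2}$, because all constants depend only on finitely many derivatives of $\phi$ and $a$, the lower bound on $|\det H|$, and $r$.

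A cleaner route for uniformity is the following. Since $|\nabla\psi_{v,\omega}(x)| \gtrsim |x - x_*|$ on the ball, split the integral with a cutoff at scale $\delta = \lambda^{-1/2}$ around $x_*$. The inner region contributes $O(\delta^m) = O(\lambda^{-m/2})$. The outer region is handled by integration by parts with dyadic decomposition in $|x-x_*|$. On the annulus where $|x - x_*| \sim 2^j\delta$, each integration by parts gains a factor $(\lambda\, 2^j\delta \cdot 2^j \delta)^{-1} = 2^{-2j}$. This yields $\sum_j (2^j\delta)^m 2^{-2jN} \lesssim \lambda^{-m/2}$.

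In the first case, where there is no critical point in $\text{supp}(a_j)$, the same bound follows by the same dyadic argument centered at the nearest point. Alternatively, one can use the lower bound $|\nabla\psi| \gtrsim \text{dist}$ to an extended critical point in $U$, or directly from compactness if the gradient stays away from zero. This gives $|I(\zeta)| \lesssim |\zeta|^{-m/2}$, which implies the claimed bound with an $\varepsilon$ to spare.

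\textbf{Main obstacle.} The expected difficulty is the uniformity of the stationary phase bounds over the compact parameter family $(v,\omega)$. It requires care in two situations:
\begin{itemize}
\item critical points approaching the boundary of a partition piece;
\item the case where the critical point lies just outside $\text{supp}(a_j)$.
\end{itemize}
Both are addressed by working on the slightly larger neighborhood $U$, where nondegeneracy persists, and by using the dyadic integration-by-parts argument above, which depends only on the quantitative lower bound $|\nabla\psi_{v,\omega}(x)| \gtrsim |x - x_*|$ (or $\gtrsim r$ if no critical point lies in $U$). The $\varepsilon$ loss in the statement also gives room to be sloppy about logarithmic factors, for instance when $m$ is small and $N$ is limited by regularity.
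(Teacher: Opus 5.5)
Your argument is correct, and it actually proves the sharper bound $|I(\zeta)|\lesssim|\zeta|^{-m/2}$. It is organized differently from the paper's.

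\textbf{The paper's route.} The paper first localizes $a$ so that any critical point is unique. It then splits according to whether $|\xi + D\phi(x)^t\eta|$ stays above $\lambda^{\varepsilon/2-1}$ on $\text{supp}(a)$, measured in the normalization where the phase is $\lambda=|\zeta|$ times a unit-size function. In the near-stationary case it picks $x_0\in\text{supp}(a)$ where this quantity is small and sets $\xi_0=-D\phi(x_0)^t\eta$. It then moves the mismatch $e^{2\pi i\lambda(\xi-\xi_0)\cdot x}$ into the amplitude, so the new phase has an exact nondegenerate critical point at $x_0$ and H\"{o}rmander's stationary phase theorem can be quoted as a black box. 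The new amplitude only satisfies $|\partial^\beta a_0|\lesssim\lambda^{|\beta|\varepsilon/2}$, which is where the $\lambda^{\varepsilon}$ loss comes from. Only at the end does it deduce $|\eta|\gtrsim\lambda$.

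\textbf{Your route.} You make the conic split $|\xi|\ge C_0|\eta|$ versus $|\xi|\le C_0|\eta|$ first, and reduce to a compact family of normalized phases $\psi_{v,\omega}$. You then prove the uniform estimate by hand, using a uniform inverse function theorem and dyadic integration by parts around a genuine critical point that need not lie in the support of the piece.

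\textbf{Comparison.} The paper's route is shorter. Yours avoids the $\varepsilon$ loss and makes the uniformity explicit, and this is not merely cosmetic. The paper's threshold $\lambda^{\varepsilon/2-1}$ lies below the $\lambda^{-1/2}$ scale at which plain integration by parts gains anything. So the rapid decay asserted in its non-stationary regime really rests on the nearby critical point lying outside $\text{supp}(a)$, not on integration by parts alone. That is exactly the configuration your dyadic argument treats.

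\textbf{One detail to tighten.} Take a piece $a_j$ supported in $B(c_j,r/2)$. The dichotomy should be whether $\psi_{v,\omega}$ has a critical point in $B(c_j,r)$, where $|\nabla\psi_{v,\omega}(x)-\nabla\psi_{v,\omega}(y)|\gtrsim|x-y|$ holds. It should not be whether it has one in all of $U$, since $\nabla\psi_{v,\omega}$ need not be injective on $U$.
\begin{itemize}
\item If there is a critical point in $B(c_j,r)$, run your dyadic argument around it.
\item If there is none, use the quantitative inverse function theorem, $\nabla\psi_{v,\omega}(B(x,\rho))\supset B(\nabla\psi_{v,\omega}(x),c\rho)$. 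This gives $|\nabla\psi_{v,\omega}|\gtrsim r$ on $\text{supp}(a_j)$ uniformly in $(v,\omega)$.
\end{itemize}
Compactness alone would not give that lower bound, because critical points can sit arbitrarily close to the piece.
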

\begin{proof}
    Let $\lambda = |\zeta|$. If $\lambda < 10$, then the trivial bound $|I(\xi,\eta)| \lesssim 1$ proves what is required, so we may assume in what follows that $\lambda \geq 10$, and also that $\varepsilon \leq 1$. By assumption, any critical point of the oscillatory integral is non-degenerate. Therefore, by breaking up $a$ into $O(1)$ functions supported on smaller sets, we may assume without loss of generality that, if a critical point for the oscillatory integral exists, then it is unique. We then consider two regimes:
\begin{description}[leftmargin=0.5cm,font=\mdseries\itshape,itemsep=10pt]
    \item[The Non-Stationary Regime] Suppose
    \begin{equation}
        | \xi + D \phi(x)^t \eta | \geq \lambda^{\varepsilon/2 - 1} \quad\text{for all $x \in \text{supp}(a)$}.
    \end{equation}
    Then successive integration by parts yields that
    %
    % f to be in C^{m/2 + 1}
    \begin{equation} \label{eq:finishingproofdecaypart1}
        |I(\zeta)| \lesssim_N \lambda^{-N} \quad\text{for all $N > 0$,}
    \end{equation}
    which is a sufficient bound if $N \geq m/2 - \varepsilon$.

    \item[The Stationary Regime] Suppose that
    \begin{equation} \label{eq:xiclosetocriticalpoint}
        | \xi + D\phi(x_0)^t \eta| \leq \lambda^{\varepsilon/2 - 1} \quad\text{for some $x_0 \in \text{supp}(a)$.}
    \end{equation}
    Let $\xi_0 = - D\phi(x_0)^t \eta$. If we define $a_0(x) = a(x) e^{2 \pi i \lambda (\xi - \xi_0) \cdot x}$, then \eqref{eq:xiclosetocriticalpoint} implies
    \begin{equation} \label{eq:alpha0derivativebounds}
        |\partial^\beta a_0| \lesssim \lambda^{|\beta| \varepsilon / 2} \quad\text{for all multi-indices $\beta$.}
    \end{equation}
    We should therefore view $a_0$ as smooth, up to a $\varepsilon$-loss in $\lambda$. We can then write
    \begin{equation}
        I(\zeta) = \int a_0(x) e^{2 \pi i \lambda( \xi_0 \cdot x + \eta \cdot \phi(x) )}\; dx.
    \end{equation}
    The phase of this oscillatory integral has a unique non-degenerate critical point at $x_0$. The principle of stationary phase (see, for instance, Theorem 7.7.5 of \cite{HormanderVol1}), together with \eqref{eq:alpha0derivativebounds} thus implies
    \begin{equation} \label{eq:iofejioruvnweicne2iodnio12312}
        |I(\zeta)| \lesssim \Big[ \sup\nolimits_{|\beta| \leq 2} \| \partial^\beta a_0 \|_{L^\infty(\RR^d)} \Big] |\eta|^{-m/2} \lesssim \lambda^\varepsilon |\eta|^{-m/2}.
    \end{equation}
    Since \eqref{eq:xiclosetocriticalpoint} implies
    \begin{equation}
        \lambda \leq |\xi| + |\eta| \leq \lambda^{\varepsilon/2 - 1} + |D\phi(x_0)^t \eta| + |\eta| \leq \lambda^{\varepsilon/2 - 1} + O(|\eta|)
    \end{equation}
    we conclude since $\lambda \geq 10$ and $\varepsilon \leq 1$ that $\lambda \lesssim |\eta|$, and thus from \eqref{eq:iofejioruvnweicne2iodnio12312} that
    \begin{equation} \label{eq:finishingproofofdecaypart2}
        |\widehat{\mu}(\zeta)| \lesssim \lambda^{\varepsilon - m/2},
    \end{equation}
    which is sufficient for our purposes.
\end{description}
Combining \eqref{eq:finishingproofdecaypart1} and \eqref{eq:finishingproofofdecaypart2} yields the claim.
\end{proof}

We now apply this result to smooth volume densities supported on manifolds, and obtain the following claim.

\begin{prop} \label{prop:SalemManifoldIfNondgenerate}
    Suppose $M \subset \RR^{m+k}$ is the graph of a smooth function $\phi: \RR^m \to \RR^k$. Suppose that there exists a point $x_0 \in \RR^m$ so that $H(\phi,\eta,x_0)$ is non-singular for all $\eta \neq 0$, where $H$ is defined in \eqref{eq:HessDefinition}. Then $M$ is a Salem set.
\end{prop}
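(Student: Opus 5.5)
The plan is to build a measure on $M$ from a smooth density localized near $(x_0,\phi(x_0))$ and apply Lemma \ref{Lemma:OscillatoryIntegralNondegenerateLemma}. The first step is to extend non-singularity from $x_0$ to a neighbourhood. The map $(\eta,x) \mapsto H(\phi,\eta,x)$ is linear in $\eta$ and continuous in $x$. The condition ``$H(\phi,\eta,x)$ is non-singular for all $\eta \neq 0$'' is therefore equivalent to $\det H(\phi,\eta,x) \neq 0$ for all $\eta$ in the unit sphere $S^{k-1}$, because non-singularity is invariant under scaling $\eta$ by a nonzero factor. The function $(\eta,x) \mapsto \det H(\phi,\eta,x)$ is continuous and nonvanishing on the compact set $S^{k-1} \times \{x_0\}$. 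A standard compactness (tube lemma) argument then gives a $\delta > 0$ such that $\det H(\phi,\eta,x) \neq 0$ for all $\eta \in S^{k-1}$ and $|x - x_0| \leq \delta$. Scaling back, $H(\phi,\eta,x)$ is non-singular for all $\eta \neq 0$ and all $x$ in the closed ball $B(x_0,\delta)$.

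Next, choose a non-negative, not identically zero $a \in C_c^\infty(\RR^m)$ supported in $B(x_0,\delta)$. Define $\mu$ as the pushforward of $a(x)\,dx$ under the graph map $x \mapsto (x,\phi(x))$. This is a finite, nonzero, non-negative Borel measure supported on $M$, and its Fourier transform is exactly the integral $I(\xi,\eta)$ of Lemma \ref{Lemma:OscillatoryIntegralNondegenerateLemma}, up to the sign convention in the exponent. Replacing $\zeta$ by $-\zeta$ does not affect bounds in $|\zeta|$. The hypothesis of the lemma holds on $\text{supp}(a)$ by the first step, so the lemma gives $|\widehat{\mu}(\zeta)| \lesssim_\varepsilon |\zeta|^{\varepsilon - m/2}$ for every $\varepsilon > 0$.

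From this, $\fdim(M) \geq m - 2\varepsilon$ for all $\varepsilon$, hence $\fdim(M) \geq m$. Since $M$ is an $m$-dimensional smooth submanifold, its Hausdorff dimension is $m$, and Fourier dimension never exceeds Hausdorff dimension. So the two coincide and $M$ is Salem.

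The only step needing care is the openness argument in the first paragraph: it requires reducing ``all $\eta \neq 0$'' to a compact set of $\eta$ before invoking continuity. The rest is bookkeeping, given that the lemma has already done the analytic work.
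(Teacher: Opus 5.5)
Your proposal is correct and follows the paper's proof: use compactness and homogeneity in $\eta$ to extend non-singularity of $H(\phi,\eta,\cdot)$ to a neighbourhood of $x_0$, apply Lemma~\ref{Lemma:OscillatoryIntegralNondegenerateLemma} to the pushforward of a bump density supported there, and compare with the Hausdorff dimension $m$. You are slightly more explicit than the paper, for instance in requiring $a \geq 0$ and not identically zero, and in spelling out the reduction to $\eta \in S^{k-1}$.
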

\begin{proof}
    The assumption, by compactness, continuity, and homogeneity of $H$ in $\eta$, implies that there exists a neighborhood $U$ of $x_0$ so that $H(\phi,\eta,x)$ is non-singular for all $x \in U$ and $\eta \neq 0$. If $a \in C_c^\infty(\RR^m)$ is supported on $U$, and $I$ is defined as in Lemma \ref{Lemma:OscillatoryIntegralNondegenerateLemma}, then that lemma implies $|I(\zeta)| \lesssim_\varepsilon |\zeta|^{\varepsilon - m/2}$  for all $\varepsilon > 0$. But there exists a finite-nonnegative measure $\mu$ supported on $M$ such that $\widehat{\mu}(\zeta) = I(\zeta)$ for all $\zeta \in \RR^d$, and so we have proved that $M$ has Fourier dimension at least $m$. The Hausdorff dimension of $M$ is $m$, and always upper bounds the Fourier dimension, and so it immediately follows that $M$ is a Salem set.
\end{proof}

To determine how large the codimension of a manifold must be before it fails to be Salem, we are thus lead to the following question: for a given $m$, what is the largest $k$ such that there exists a function $\phi: \RR^m \to \RR^k$ and $x \in \RR^m$ so that $H(\phi,\eta,x)$ is non-singular for all $x \neq 0$. Given \emph{any} symmetric $m \times m$ matrices $A_1,\dots,A_k$, if we define
\begin{equation}
    \phi(x) = \left( x^t A_1 x / 2, \dots, x^t A_k x / 2 \right),
\end{equation}
then $H(\phi,\eta,x) = \eta_1 A_1 + \cdots + \eta_k A_k$. We thus see our problem is equivalent to a completely algebraic problem: for a given $m$, what is the largest $k$ so that there exists $k$ symmetric $m \times m$ matrices $A_1,\dots,A_k$, so that for any $\eta \neq 0$, the matrix $\eta_1 A_1 + \cdots + \eta_k A_k$ is non-singular. Equivalently, what is the largest dimension of a subspace of the space of all $m \times m$ symmetric matrices so that all non-zero elements of this subspace are non-singular. Fortunately, this problem has been fully resolved by methods of representation theory and algebraic topology, and we briefly indulge in an outline of the history of this problem.

The original incarnation of the problem, connected with the theory of quadratic forms in number theory, was to find the largest $k$ such that there exists $k$ orthogonal $m \times m$ matrices $O_1,\dots,O_k$ so that $\eta_1 O_1 + \cdots + \eta_k O_k$ is orthogonal whenever $|\eta| = 1$. In 1898, Adolf Hurwitz \cite{Hurwitz} proved that $k$ was equal to $m$ only when $m \in \{ 1, 2, 4, 8 \}$, and two decades later Johann Radon \cite{Radon} extended this method to prove that $k$ was always at least $\rho(m)$ for all $m$, where $\rho(m)$ are the Radon Hurwitz numbers defined in the introduction. %and equal to $\rho(m)$ when $m$ was a power of two.

To obtain the required upper bounds, algebraic topology enters the picture. To get a feeling why the problem has a topological nature, let us consider a short proof as to why $k = 1$ when $m$ is odd. Consider any two non-singular matrices $A_1$ and $A_2$. We will show some non-trivial linear combination of the matrices must be singular. Consider the continuous matrix-valued function
\begin{equation}
    A(\eta) = \eta_1 A_2 + \eta_2 A_2
\end{equation}
defined on $S^1$. If $f(\theta)$ is the determinant of $A(\cos(\theta), \sin(\theta))$, then
\begin{equation}
    f(0) = \det(A_1) \quad\text{and}\quad f(\pi) = - \det(A_1).
\end{equation}
The intermediate value theorem implies there exists some $\theta_0 \in [0,\pi]$ so that $f(\theta_0) = 0$, and so we conclude $\cos(\theta_0) A_1 + \sin(\theta_0) A_2$ is singular, as required.

Of course, the analysis of the more general result, where $k$ can be arbitrarily large, requires a much more robust set of topological machinery In the 1940s Beno Eckmann \cite{Eckmann} and Heinz Hopf \cite{Hopf} independently made the following observation. Suppose $A_1,\dots,A_k$ are matrices such that $A(\eta) = \sum \eta_i A_i$ is non-singular when $\eta \neq 0$. By multiplying each $A_i$ on the left by $A_k$, we may assume without loss of generality that $A_k = I$. It then follows that the vector fields
\begin{equation}
    X_1,\dots,X_{k-1}: S^{m-1} \to \RR^d \quad\text{defined by}\quad X_i(x) = \Pi_x (A_i x)
\end{equation}
are everywhere linearly independent, where $\Pi_x$ is orthogonal projection onto $T_x S^{m-1} \subset \RR^m$. Indeed the vectors $A_1 x, \dots, A_k x$ must be linearly independent, and since $A_k x = x$ it follows that no linear combination of $\{ A_i x : i < k \}$ can be a multiple of $x$. So we obtain a family of $k-1$ vector fields on $S^{m-1}$ that are everywhere linearly independent. We thus see that to control the maximum $k$ for which $A_1,\dots, A_k$ exist, we are forced to understand how vector fields on $S^{m-1}$ can remain linearly independent. This problem can be viewed in terms of \emph{$K$-theory}, the topological study of vector bundles defined on $S^{m-1}$.

Following work of Norman Steenrod, John Whitehead, and Hirosi Toda \cite{SteenroadWhitehead,Toda}, in 1962 John Adams \cite{Adams} was able to show that for $k > \rho(m)$ there does not exist a family of $k-1$ everywhere linearly independent continuous vector fields on $S^{m-1}$. This completed the proof that $\rho(m)$ was exactly the largest dimension of a subspace of matrices any non-zero element of which was non-singular for any $m$. Via a simple reduction to the non-singular case, in 1965 John Adams, Peter Lax, and Ralph Phillips \cite{AdamsLaxPhillips} then showed that $\rho(m/2) + 1$ is the largest dimension of the subspace of $m \times m$ symmetric matrices such that any non-zero element is non-singular.

Using Proposition \ref{prop:SalemManifoldIfNondgenerate}, the following results thus immediately follow.

%Adams' landmark result \cite{Adams} explicitly determined the values of $\rho(m)$ for all $m \geq 1$, and shoed that $\rho(m)$ was also the largest dimension of a subspace of $m \times m$ matrices such that any non-zero element is invertible. Later, Adams, Lax, and Phillips \cite{AdamsLaxPhillips} showed that $\rho(m/2) + 1$ 

\begin{prop} \label{prop:ExistenceofSalemCodimension}
    If $k \leq \rho(m/2) + 1$, then there exists a smooth $m$-dimensional submanifold of $\RR^{m+k}$ which is a Salem set.
\end{prop}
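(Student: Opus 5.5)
The plan is to reduce the statement to the algebraic result of Adams, Lax, and Phillips and then apply Proposition \ref{prop:SalemManifoldIfNondgenerate}. Fix $m$ and $k \leq \rho(m/2) + 1$. By \cite{AdamsLaxPhillips}, there is a $(\rho(m/2)+1)$-dimensional subspace of the real symmetric $m \times m$ matrices in which every non-zero element is non-singular. When $m$ is odd we read $\rho(m/2) = 0$, so the bound is $k \le 1$; in that case the subspace is simply spanned by the identity. Any subspace of such a subspace has the same property. So for $k \leq \rho(m/2)+1$ we can choose linearly independent symmetric matrices $A_1, \dots, A_k$ such that $\eta_1 A_1 + \cdots + \eta_k A_k$ is non-singular for every $\eta \in \RR^k \setminus \{0\}$.

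Next define the quadratic map $\phi: \RR^m \to \RR^k$ by $\phi(x) = (x^t A_1 x/2, \dots, x^t A_k x/2)$. Its graph $M = \{(x,\phi(x)) : x \in \RR^m\}$ is a smooth, in fact real-analytic, $m$-dimensional submanifold of $\RR^{m+k}$. By the computation preceding the statement, $\mathrm{Hess}(\phi_i)(x) = A_i$ for all $x$. Hence
\[ H(\phi,\eta,x) = \eta_1 A_1 + \cdots + \eta_k A_k, \]
which is non-singular for all $\eta \neq 0$ and every $x$, in particular at $x_0 = 0$. Proposition \ref{prop:SalemManifoldIfNondgenerate} therefore applies directly and shows that $M$ is a Salem set.

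There is essentially no analytic obstacle here, since Lemma \ref{Lemma:OscillatoryIntegralNondegenerateLemma} and Proposition \ref{prop:SalemManifoldIfNondgenerate} already carry the stationary phase argument. The only step needing care is citing the algebraic input correctly: the Adams--Lax--Phillips theorem has to be used in its existence direction, namely that a nonsingular symmetric subspace of dimension exactly $\rho(m/2)+1$ exists. The odd-$m$ convention $\rho(m/2)=0$ also has to be checked separately, and there the identity matrix alone suffices, so the surface is a paraboloid-type hypersurface with non-vanishing Gauss curvature.

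If one prefers to avoid relying on the full Adams--Lax--Phillips existence statement, there is a direct Radon--Hurwitz construction for even $m$. Write $m = 2n$ and take $\rho(n)$ skew-symmetric orthogonal matrices $J_1, \dots, J_{\rho(n)-1}$, together with $J_0 = I$, giving a Radon--Hurwitz family on $\RR^n$ with $\sum \eta_i J_i$ orthogonal up to scaling. Then form the symmetric $2n \times 2n$ block matrices
\[ \begin{pmatrix} 0 & J_i \\ J_i^t & 0 \end{pmatrix} \quad\text{and}\quad \begin{pmatrix} I & 0 \\ 0 & -I \end{pmatrix}. \]
These are $\rho(n)+1$ symmetric matrices, and one checks that any non-trivial linear combination of them squares to a positive multiple of the identity, so it is non-singular.
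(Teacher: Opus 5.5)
Your proposal is correct and follows the paper's argument: you take a $k$-dimensional subspace of symmetric matrices whose non-zero elements are all non-singular (from Adams--Lax--Phillips), set $\phi(x) = (x^tA_1x/2,\dots,x^tA_kx/2)$ so that $H(\phi,\eta,x)=\sum_i\eta_iA_i$, and apply Proposition \ref{prop:SalemManifoldIfNondgenerate}. Your optional Clifford-type block construction is also a valid self-contained proof of the existence half of Adams--Lax--Phillips, with one miscount: the family has $\rho(n)-1$ skew matrices $J_1,\dots,J_{\rho(n)-1}$, not $\rho(n)$, and your final total of $\rho(n)+1$ symmetric matrices is right.
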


\begin{lemma} \label{lemma:StationaryDegenerateHighCodimensionLemma}
    If $k > \rho(m/2) + 1$, and $M$ is any $m$-dimensional $C^2$-submanifold of $\RR^{m+k}$, then for each $z_0 \in M$, there exists $\zeta \in (T_{z_0} M)^\perp$ and $v \in T_{z_0} M$ so that the function $\zeta \cdot (z - z_0)$ vanishes to order three at $z_0$ in the direction $v$.
%     $\phi: \RR^m \to \RR^k$ is any $C^2$ function, then for each $x \in \RR^m$ there exists $\eta \in \RR^k$ so that $H(\phi,\eta,x)$ is singular, where $H$ is defined in \eqref{eq:HessDefinition}.
\end{lemma}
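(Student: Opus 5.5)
Fix $z_0 \in M$. After a translation and a rotation we take $z_0 = 0$ and $T_{z_0}M = \RR^m \times \{0\}$. Near $z_0$ we write $M$ as the graph of a $C^2$ function $\phi: \RR^m \to \RR^k$ with $\phi(0) = 0$ and $D\phi(0) = 0$. Then $(T_{z_0}M)^\perp = \{0\} \times \RR^k$. For $\zeta = (0,\eta)$, the function $z \mapsto \zeta \cdot (z - z_0)$, pulled back to the chart $x \mapsto (x,\phi(x))$, is simply $x \mapsto \eta \cdot \phi(x)$. This function vanishes at $0$, and its gradient there is $D\phi(0)^t \eta = 0$. Its Taylor expansion to second order at $0$ is therefore $\tfrac12 x^t H(\phi,\eta,0) x + o(|x|^2)$, with $H$ as in \eqref{eq:HessDefinition}. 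So ``vanishing to order three at $z_0$ in the direction $v$'' amounts to finding $\eta \neq 0$ and $v \neq 0$ with $v^t H(\phi,\eta,0) v = 0$; the restriction $t \mapsto \eta\cdot\phi(tv)$ then has vanishing value, first and second derivative at $t=0$. I will in fact produce $v$ with $H(\phi,\eta,0) v = 0$, which is stronger: the full Hessian degenerates in the direction $v$.

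The reduction to linear algebra goes as follows. The matrices $A_i = \text{Hess}(\phi_i)(0)$, for $i = 1,\dots,k$, are symmetric $m \times m$ matrices, because $\phi$ is $C^2$. Their span $V$ is a subspace of the symmetric matrices. I split into two cases according to whether the map $\eta \mapsto \sum_i \eta_i A_i$ is injective.

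If it is not injective, pick $\eta \neq 0$ with $H(\phi,\eta,0) = 0$; then every $v \neq 0$ works. If it is injective, then $\dim V = k > \rho(m/2) + 1$. By the theorem of Adams, Lax and Phillips \cite{AdamsLaxPhillips} quoted above, every subspace of symmetric $m\times m$ matrices in which all nonzero elements are nonsingular has dimension at most $\rho(m/2)+1$. Hence some nonzero element $\sum \eta_i A_i$, with $\eta \neq 0$, is singular, and we choose $v \neq 0$ in its kernel. In both cases $\zeta = (0,\eta)$ is a nonzero normal vector, and $v$ (viewed as $(v,0) \in T_{z_0}M$) is the required direction.

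I expect no serious obstacle; the deep input is entirely the cited Adams--Lax--Phillips theorem. The only care needed is in the conventions. First, one must check that the phrase ``vanishes to order three'' is justified for a $C^2$ manifold: the second-order Taylor remainder is $o(t^2)$, which is what we use. Second, one must handle the degenerate case where the $A_i$ are linearly dependent, which is exactly the non-injective case above. Finally, when $m$ is odd we have $\rho(m/2) = 0$, so the theorem reads $k > 1$; this is consistent with the elementary intermediate-value argument given earlier in the paper.
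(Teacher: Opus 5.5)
Your proof is correct and follows essentially the paper's route. The paper gives no separate proof: it treats the lemma as immediate from writing $M$ locally as a graph, identifying the relevant Hessian with $H(\phi,\eta,0)=\sum_i \eta_i \text{Hess}(\phi_i)(0)$, and applying the Adams--Lax--Phillips bound $\rho(m/2)+1$ on subspaces of symmetric matrices whose nonzero elements are all nonsingular. Taking $v$ in the kernel, so that $H(\phi,\eta,0)v=0$ rather than merely $v^tH(\phi,\eta,0)v=0$, is the right choice, since the kernel property is what the proof of Proposition~\ref{lemma:UpperBoundFourierDimensionProposition} uses.
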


We have seen how the study of smooth volume densities on a manifold leads to a conjecture on the largest codimension of a Salem $m$-dimensional manifold. Using Lemma \ref{lemma:StationaryDegenerateHighCodimensionLemma}, it is not too difficult using standard oscillatory integral techniques %\footnote{For instance, the Esse\'{e}n concentration inequality implies this (see Lemma 7.17 of \cite{TaoVu}).}
to show that no smooth volume density $\mu$ on an $m$-dimensional $C^2$-manifold $M$ of codimension $k > \rho(m/2) + 1$ can satisfy a Fourier decay bound of the form $|\widehat{\mu}(\zeta)| \lesssim |\zeta|^{-s/2}$ for $s > m - 1/3$. However, the difficulty in the remainder of the argument is showing that one cannot get this decay for \emph{any} measure, no matter how non-smooth. The remainder of the argument consists of showing directly that no finite Borel measure supported can have faster Fourier decay then one obtained by a smooth volume density.

To prove this, we will take an arbitrary Borel measure $\mu$ on $M$, and show that the only way to have an estimate of the form $|\widehat{\mu}(\zeta)| \lesssim |\zeta|^{-s/2}$ for $s$ sufficiently close to $m$ is if $\mu = 0$. We will do this by showing $\mu$ must assign small measure to a family of elongated tubes on the set $M$, which efficiently cover $M$, and the only way this is possible if $s > m - 1/3$ is if $\mu = 0$. The upper bound $\fordim(M) \leq m - 1/3$ immediately follows.

To conclude this section, we provide two additional propositions relevant to Remark \ref{remark:aiodjwaoidjawoidjawoiej12oi34j12io312} and Theorem \ref{momentcurvetheorem}.

\begin{prop} \label{prop:ajwidhwaiudjaiouwdjwaqoij12oi321label}
    The graph of the function $\phi(x,t) = |x|^2 + t^3$ on $\RR^{m-1} \times \RR$ has Fourier dimension at least $m-1/3$.
\end{prop}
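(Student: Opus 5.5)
We construct an explicit measure on the graph and bound its Fourier transform by a tensor-product argument. Fix $a_1 \in C_c^\infty(\RR^{m-1})$ and $a_2 \in C_c^\infty(\RR)$, nonnegative and not identically zero. Let $\mu$ be the pushforward of $a_1(x)a_2(t)\,dx\,dt$ under $(x,t) \mapsto (x,t,\phi(x,t))$. Writing $\zeta = (\xi,\tau,\eta) \in \RR^{m-1}\times\RR\times\RR$, the phase splits:
\[ \widehat{\mu}(\zeta) = I_1(\xi,\eta)\, I_2(\tau,\eta), \]
where
\[ I_1(\xi,\eta) = \int a_1(x) e^{2\pi i(\xi\cdot x + \eta|x|^2)}\,dx \quad\text{and}\quad I_2(\tau,\eta) = \int a_2(t) e^{2\pi i(\tau t + \eta t^3)}\,dt. \]
It then suffices to prove
\[ |\widehat{\mu}(\zeta)| \lesssim |\zeta|^{-(m-1/3)/2}, \]
or the same bound up to an $\varepsilon$-loss, which still gives Fourier dimension at least $m - 1/3$.

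\emph{Bounding $I_1$.} The map $\RR^{m-1}\to\RR$, $x\mapsto|x|^2$, satisfies the hypothesis of Lemma \ref{Lemma:OscillatoryIntegralNondegenerateLemma}: its Hessian is $2\eta I$, which is non-singular for $\eta\ne0$. The lemma gives
\[ |I_1(\xi,\eta)| \lesssim_\varepsilon (1+|\xi|+|\eta|)^{\varepsilon-(m-1)/2}. \]
In fact the $\varepsilon$ can be removed, since $I_1$ is an explicit Gaussian-type integral, but we do not need this.

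\emph{Bounding $I_2$.} We claim
\[ |I_2(\tau,\eta)| \lesssim (1+|\tau|+|\eta|)^{-1/6}. \]
If $|\tau| \geq C|\eta|$ with $C$ large relative to the support of $a_2$, then the phase derivative satisfies $|\tau + 3\eta t^2| \gtrsim |\tau|$. Integrating by parts then gives $|I_2| \lesssim_N (1+|\tau|)^{-N}$. Otherwise $|\eta| \gtrsim |\tau|+|\eta|$. Since the third derivative of the phase is $6\eta$, van der Corput's lemma with $k=3$ gives $|I_2| \lesssim |\eta|^{-1/3}$; we keep the amplitude $a_2$ by the usual integration-by-parts form of the lemma. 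This is in fact stronger than claimed.

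\emph{Combining.} Let $\lambda = |\zeta|$; we consider two cases.

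\emph{Case $|\eta| \gtrsim \lambda$.} Then
\[ |\widehat{\mu}| \lesssim \lambda^{\varepsilon-(m-1)/2}\,\lambda^{-1/3} = \lambda^{\varepsilon-(m-1/3)/2}\cdot\lambda^{-1/6}, \]
since $(m-1)/2 + 1/3 = m/2 - 1/6 = (m-1/3)/2$. This is more than enough.

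\emph{Case $|\eta| \ll \lambda$.} Then either $|\xi|\sim\lambda$ or $|\tau|\sim\lambda$.
\begin{itemize}
\item If $|\tau| \sim \lambda \gg |\eta|$, then $I_2$ decays rapidly, so $|\widehat\mu| \lesssim_N \lambda^{-N}$.
\item If $|\xi| \sim \lambda \gg |\eta|$, then the phase of $I_1$ has gradient $|\xi + 2\eta x| \gtrsim |\xi|$ on the support, so $I_1$ decays rapidly and $|\widehat\mu| \lesssim_N \lambda^{-N}$.
\end{itemize}
In every case $|\widehat{\mu}(\zeta)| \lesssim_\varepsilon |\zeta|^{\varepsilon-(m-1/3)/2}$. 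Letting $\varepsilon\to0$ in the definition of Fourier dimension gives $\fordim \geq m - 1/3$.

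The only delicate point is bookkeeping of the regimes where $\eta$ is small relative to $\zeta$. There the lemma's bound on $I_1$ alone is too weak, and we must use non-stationary phase in whichever of $\xi$ or $\tau$ carries the size of $\zeta$.
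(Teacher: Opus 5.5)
Your proposal is correct and follows essentially the same route as the paper. Both arguments tensorize $\widehat{\mu} = I_1 I_2$ and use non-stationary phase when $\xi$ or $\tau$ dominates $\eta$; otherwise they combine the $|\eta|^{-(m-1)/2}$ bound for $I_1$ with van der Corput's $|\eta|^{-1/3}$ for $I_2$. The paper gets the $I_1$ bound by stationary phase directly, which avoids the harmless $\varepsilon$-loss you incur from Lemma \ref{Lemma:OscillatoryIntegralNondegenerateLemma}.

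Two small slips to fix. First, the stated claim $|I_2| \lesssim (1+|\tau|+|\eta|)^{-1/6}$ would by itself be too weak; what you actually prove and use is $|I_2| \lesssim |\eta|^{-1/3}$. Second, since $(m-1)/2 + 1/3 = (m-1/3)/2$ exactly, there is no extra factor $\lambda^{-1/6}$ in your first case, so the bound is exactly enough rather than more than enough.
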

\begin{proof}
    As above, we reduce to the analysis of oscillatory integrals. We may find a measure $\mu$ supported on the graph so that
    \begin{equation}
        \widehat{\mu}(\xi,\omega,\eta) = \int a_1(x) a_2(t) e^{2 \pi i ( \xi \cdot x + \omega t + \eta \cdot \phi(x,t) )}\; dx\; dt,
    \end{equation}
    where $a_1$ and $a_2$ are smooth and compactly supported. This integral tensorizes, i.e. we can write $\widehat{\mu}(\xi,\omega,\eta) = I_1(\xi,\eta) I_2(\omega,\eta)$, where
    \begin{equation}
        I_1(\xi,\eta) = \int a_1(x) e^{2 \pi i (\xi \cdot x + \eta |x|^2)}\; dx\ \text{and}\ I_2(\omega,\eta) = \int a_2(t) e^{2 \pi i (\omega t + \eta t^3)}\; dt.
    \end{equation}
    If $|\xi| \gtrsim |\eta|$ and $|\xi| \geq |\omega|$, the phase of the oscillatory integral $I_1$ is non-stationary, and we can integrate by parts to conclude that $|I_1(\xi,\eta)| \lesssim_N |\xi|^{-N}$ for arbitrarily large $N > 0$. Combining this with the trivial estimate $|I_2(\omega,\eta)| \lesssim 1$ we conclude that $|\widehat{\mu}(\zeta)| \lesssim_N |\zeta|^{-N}$. A similar analysis works if $|\omega| \gtrsim |\eta|$ and $|\omega| \geq |\xi|$, so we may assume that $|\eta| \gtrsim |\xi|$ and $|\eta| \gtrsim |\omega|$. The stationary phase principle then justifies that $|I_1(\xi,\eta)| \lesssim |\eta|^{-(m-1)/2}$, and the Van der Corput Lemma (see Chapter VIII, Proposition 2 of \cite{Stein}) justifies that $|I_2(\omega,\eta)| \lesssim |\eta|^{-1/3}$. Combining these estimates yields that $|\widehat{\mu}(\zeta)| \lesssim |\zeta|^{-m/2 + 1/6}$. Since $-m/2 + 1/6 = -s/2$ with $s = m-1/3$ this completes the proof.
\end{proof}

\begin{prop} \label{prop:dkpoawkdpoawdkpaowkkvmdsovselabel}
    If $C \subset \RR^n$ is a smooth, nondegenerate curve, $\fordim(C) \geq 2/n$.
\end{prop}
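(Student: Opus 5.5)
Following the pattern of Lemma \ref{Lemma:OscillatoryIntegralNondegenerateLemma} and Proposition \ref{prop:ajwidhwaiudjaiouwdjwaqoij12oi321label}, we take a smooth, compactly supported density on $C$ and bound its Fourier transform by $|\zeta|^{-1/n}$. Work locally with a parametrization $\gamma: I \to \RR^n$ and a bump $a \in C_c^\infty(I)$ supported on a short interval $J$, and define $\mu$ by
\[ \widehat{\mu}(\zeta) = \int a(t) e^{2\pi i \zeta \cdot \gamma(t)}\, dt. \]
Since $|\widehat{\mu}(\zeta)| \lesssim |\zeta|^{-1/n} = |\zeta|^{-s/2}$ with $s = 2/n$, this bound gives $\fordim(C) \geq 2/n$.

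The key step is a uniform lower bound on the derivatives of the phase. Write $\zeta = \lambda \omega$ with $|\omega| = 1$ and $\lambda = |\zeta|$, and let $f_\omega(t) = \omega \cdot \gamma(t)$. Nondegeneracy says $\partial_t\gamma(t), \dots, \partial_t^n\gamma(t)$ span $\RR^n$ for every $t$. Hence for each $t \in I$ and each unit $\omega$, some $j \in \{1,\dots,n\}$ has $|\omega \cdot \partial_t^j \gamma(t)| > 0$. The function $(t,\omega) \mapsto \max_{1 \leq j \leq n} |\omega \cdot \gamma^{(j)}(t)|$ is continuous on the compact set $\overline{J} \times S^{n-1}$, so it is bounded below by some $c > 0$. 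By continuity we can then cover $\overline{J} \times S^{n-1}$ by finitely many product sets $J_\alpha \times \Omega_\alpha$. On each of these a single index $j_\alpha$ satisfies $|f_\omega^{(j_\alpha)}(t)| \geq c/2$ for all $t \in J_\alpha$ and $\omega \in \Omega_\alpha$.

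Now fix $\omega$ and use a smooth partition of unity $\{\chi_\alpha\}$ subordinate to the cover $\{J_\alpha\}$. For the pieces with $\omega \in \Omega_\alpha$, apply the van der Corput lemma with amplitude (Stein, Chapter VIII, Proposition 2 and its corollary). It gives
\[ \Big|\int a\chi_\alpha e^{2\pi i \lambda f_\omega}\Big| \lesssim \lambda^{-1/j_\alpha}\big(\|a\chi_\alpha\|_\infty + \|(a\chi_\alpha)'\|_{L^1}\big) \lesssim \lambda^{-1/n}, \qquad \lambda \geq 1. \]
When $j_\alpha = 1$ the lemma also needs $f_\omega'$ to be monotone. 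We avoid this by integrating by parts directly, which gives $O(\lambda^{-1})$. The constants are uniform because $c$ is uniform and $\gamma \in C^n$ on the compact set.

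The step that needs care is the index bookkeeping in the partition. Each $\omega$ belongs to several $\Omega_\alpha$, but each $t$-interval $J_\alpha$ needs its own index valid for that particular $\omega$. To arrange this, we first choose, for each $\omega_0$, a neighbourhood $\Omega$ and a finite partition of $\overline{J}$ adapted to the fixed $\omega_0$. Compactness of $S^{n-1}$ then reduces us to finitely many such $\Omega$'s. Summing the finitely many pieces gives $|\widehat{\mu}(\zeta)| \lesssim |\zeta|^{-1/n}$ for $|\zeta| \geq 1$, and the trivial bound handles $|\zeta| < 1$. This proves the proposition.
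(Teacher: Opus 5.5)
Your proposal is correct and follows essentially the same route as the paper: compactness gives a uniform lower bound $\max_j |\omega\cdot\gamma^{(j)}(t)| \geq c$, you localize so that a single index works, and then you use integration by parts for $j=1$ and van der Corput for $j\geq 2$. The paper avoids your $\omega$-dependent bookkeeping by shrinking the support of $a$ once, so that each $\gamma^{(j)}$ varies by at most $c/10$ there; the index chosen at any one point then works on the whole support for every $\zeta$.
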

\begin{proof}
    Let $\gamma: I \to \RR^n$ be a smooth parameterization of some portion of the curve, such that the vectors $e_1(t_0) = \partial_t \gamma(t_0), \dots, e_n(t_0) = \partial_t^n \gamma(t_0)$ are linearly independent. We may find a measure supported on $C$ so that
    \[ \widehat{\mu}(\xi) = \int a(t) e^{2 \pi i \xi \cdot \gamma(t)}\; dt \quad\text{for all $\xi \in \RR^n$}, \]
    where $a$ is smooth and compactly supported in $I$. We claim that $|\widehat{\mu}(\xi)| \lesssim |\xi|^{-1/n}$, which would suffice to complete the proof. Since the vectors $\{ e_i(t) \}$ are spanning for each $t \in I$, we may find $\delta > 0$ so that for all $t \in I$ and all $\xi \in \RR^d$, there exists $i \in \{ 1, \dots, n \}$ so that $|\xi \cdot e_i(t)| \geq \delta |\xi|$. By breaking up $a$ into $O(1)$ functions supported on smaller regions, we may assume without loss of generality that $|e_i(t) - e_i(t')| \leq \delta/10$ for all $t$ and $t'$ in the support of $a$. It then follows that for any $\xi \in \RR^d$, we may find some $i \in \{ 1, \dots, n \}$ so that $|\xi \cdot e_i(t)| \geq \delta |\xi|$ for all $t$ in the support of $a$. If $i = 1$, then integration by parts then justifies that $|\widehat{\mu}(\xi)| \lesssim_N |\xi|^{-N}$ for all $N > 0$. For $i \in \{ 2, \dots, n \}$, the Van der Corput lemma justifies that $|\widehat{\mu}(\xi)| \lesssim |\xi|^{-1/i} \lesssim |\xi|^{-1/n}$. In either case, we conclude that the measure $\mu$ has the required Fourier decay.
\end{proof}

% ASK JIM TODAY: HAVE I PROVED SOMETHING BETTER THAN THE ESSEEN CONCENTRATION INEQUALITY?
% Esseen
%
%   R^d epsilon^{d-s/2} for epsilon > 1
%   R^d epsilon^d for epsilon < 1
%   
%
%   epsilon^{-d}
%
%   L_{R phi} << int_{-1}^1 | I(lambda R phi) | dlambda
%             << R^{-1} int_{-R}^R | I(lambda phi) | dlambda
%             << R^{-1} ( 1 + R^{1-s/2} )
%             << R^{-1} + R^{-s/2}      (SO IF s > 2 then the result is stronger)
%
%   For epsilon > r
%   r^d (epsilon / r)^{d-s/2} = r^{-s/2} epsilon^{d-s/2}
%   For epsilon < r
%           epsilon^d

% For instance, results of Mockenhaupt (TODO), Mitsis (TODO), Bak and Seeger (TODO) show that such decay estimates imply bounds for the extension operator
%
%\[ E_\mu f(x) = \int_S f(\xi) e^{2 \pi i \xi \cdot x}\; d\mu(\xi) \]
%
%of Stein-Tomas type, i.e. such that
%
%\[ \| E_\mu f \|_{L^p(\RR^d)} \lesssim \| f \|_{L^2(S,\mu)} \quad\text{for $p > 4d/s - 2$}. \]
%
%For a general set $S$, and a general measure $\mu$ which satisfies the Frostman condition $\mu(B) \lesssim \text{rad}(B)^s$ for all balls $B \subset \RR^d$, the Fourier transform of $\mu$ may not lie in $L^p(\RR^d)$ for any $p < \infty$, so that the Stein-Tomas estimate (TODO) only holds in the trivial case where $p = \infty$.

\section{Bounds on the Measure of Stationary Slabs}

In this section, we study an arbitrary Borel measure $\mu$ supported on a set $S$. Under the assumption that $\mu$ has Fourier decay, we take a slab $\Theta$ supported on a small neighborhood of the set $S$ pointing in some direction $\zeta_0$. Under the assumption that $S$ is suitably transverse to the direction $\zeta_0$, we show that $\mu(\Theta)$ must be small. When using these results in the sequel, we will take $S$ to be a manifold, and $\zeta_0$ a vector normal to the manifold at some point (from which the transversality assumption then follows). It is possible, however, the Lemma \ref{Lemmaoaijdwoiwajdioaw} may still have some application when $S$ is a more general set for which transversality still makes sense, such as a rectifiable surface.

\noindent \begin{minipage}{\textwidth}

\begin{lemma} \label{Lemmaoaijdwoiwajdioaw}
    Let $\mu$ be a finite Borel measure supported on a set $S \subset \RR^d$ such that $|\widehat{\mu}(\xi)| \leq C \langle \xi \rangle^{-s/2}$ for all $\xi \in \RR^d$. Fix $\varepsilon > 0$, $R > 0$, and $z_0 \in S$, let $V$ be a subspace of $\RR^d$, and for $z \in \RR^d$, let $\Pi$ and $\Pi^\perp$ be the orthogonal projections onto $V$ and $V^\perp$ respectively. Consider a unit vector $\zeta_0 \in V^\perp$, and suppose $Q \subset V$ is a symmetric ellipsoid so that
    \[ |\zeta_0 \cdot (z - z_0)| \leq 1/R \quad\text{when}\quad \Pi(z - z_0) \in Q^*\ \text{and}\ |\Pi^\perp(z - z_0)| \leq R^\varepsilon / R, \]
    where $Q^*$ is a dilation of $Q$ by a factor of $2$. Then if we consider the slab
    \[ \Theta = \{ z \in \RR^d: \Pi(z - z_0) \in Q\ \text{and}\ |\Pi^\perp(z - z_0)| \leq 1/R \}, \]
    then $\mu(\Theta) \lesssim C R^{-s/2}$, with an implicit constant depending only on $d$ and $\varepsilon$.
\end{lemma}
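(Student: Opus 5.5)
Let $n = \dim V$ and normalize $z_0 = 0$. The plan is to dominate $\mathbf{1}_\Theta$ on $S$ by a smooth bump $\psi$ whose Fourier transform is concentrated near the line $\mathbf{R}\zeta_0$ at frequencies $|\zeta| \gtrsim R$. Then $\mu(\Theta) \le \int \psi\, d\mu = \int \widehat{\psi}(-\zeta)\widehat{\mu}(\zeta)\, d\zeta$, and the decay hypothesis applies. A bare tensor-product bump $\chi_Q(\Pi z)\chi(R\Pi^\perp z)$ has Fourier transform of total mass $O(1)$ but spread over all frequencies. That already gives $\int |\widehat{\psi}||\widehat{\mu}| \lesssim C$, which is useless, so the oscillation in the direction $\zeta_0$ must be used.

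\textbf{The test function.} Fix a Schwartz function $\eta$ on $\mathbf{R}$ with $\eta \ge 0$, $\eta \ge 1$ on $[-1,1]$, and $\widehat\eta$ supported in $[-1,1]$. I will set
\[ \psi(z) = \chi_Q(\Pi z)\, \chi\big(R^{1-\varepsilon}\Pi^\perp z\big)\, \eta\big(R\,\zeta_0\cdot z\big) \cdot \text{(correction)}. \]
Here $\chi_Q$ is a smooth bump equal to $1$ on $Q$ and supported on $Q^*$, and $\chi$ is a bump on $V^\perp$ supported in the unit ball. The idea is that on the support of the first two factors the hypothesis gives $|\zeta_0\cdot z| \le 1/R$, so the factor $\eta(R\zeta_0\cdot z)$ is $\ge 1$ there.

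The cleaner route is to replace $\Pi^\perp$-localization in the $\zeta_0$ direction by $\eta$. Write $\Pi^\perp z = (\zeta_0\cdot z)\zeta_0 + w$ with $w \perp \zeta_0$. Then take
\[ \psi(z) = \chi_Q(\Pi z)\, \chi\big(R^{1-\varepsilon} w\big)\, \eta\big(R\,\zeta_0\cdot z\big). \]
On $\Theta$ this gives $\psi \ge 1$, since $|w| \le 1/R \le R^{\varepsilon-1}$ and $|\zeta_0\cdot z| \le 1/R$. The function $\psi$ may be negative, but only where $\chi_Q$ or $\chi$ changes sign, so I choose both factors nonnegative.

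\textbf{The key point.} On the support of $\chi_Q(\Pi z)\chi(R^{1-\varepsilon}w)$, the hypothesis forces $|\zeta_0\cdot z| \le 1/R$. So $\psi$ restricted to $S$ agrees with $\chi_Q\chi$ times something bounded, and nothing is lost. Nevertheless the Fourier side sees the full product. In Fourier space, $\widehat\psi$ is the product of three transforms:
\begin{itemize}
\item $\widehat{\chi_Q}$ on $V$, concentrated in the dual ellipsoid $Q^\circ$ (of size $\sim$ the inverse axes of $Q$) with $L^1$ norm $O(1)$;
\item a transform in $w$ concentrated in $|w^*| \lesssim R^{1-\varepsilon}$ with $L^1$ norm $O(1)$;
\item a factor $R^{-1}\widehat\eta(\tau/R)$ in the $\zeta_0$ direction.
\end{itemize}
The last factor has $L^1$ norm $O(1)$, and I will not use its support alone but combine it with decay of $\widehat\mu$.

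\textbf{The main obstacle.} The bound $\int|\widehat\psi||\widehat\mu|$ still sees low frequencies $|\tau| \ll R$, where $\widehat\mu$ does not decay. The fix I would try is to make the $\zeta_0$-factor a difference: use
\[ \eta_R(t) = \eta(Rt) - \eta(R^{1-\delta}t)\cdot c \]
(or a Littlewood–Paley piece), so that its transform vanishes for $|\tau| \lesssim R^{1-\delta}$. On $S\cap\operatorname{supp}(\chi_Q\chi)$ the hypothesis gives $|t| \le 1/R$, and there both terms are roughly constant, namely $\eta(0)$ and $c\,\eta(0)$. Choosing $c$ appropriately (say $c = 1/2$ with $\eta(0) = 2$) keeps $\psi \ge 1/2$ on $\Theta \cap S$. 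This requires both terms to be evaluated only where $|t| \le 1/R$, which is exactly what the enlarged region $Q^*\times\{|\Pi^\perp|\le R^\varepsilon/R\}$ in the hypothesis supplies, modulo Schwartz tails controlled by the $R^\varepsilon$ room.

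Then $\widehat\psi$ is supported, up to rapidly decaying tails, where $|\zeta| \gtrsim R^{1-\delta}$. The lemma asserts $CR^{-s/2}$ exactly, so the low-frequency cutoff must sit at scale $\sim R$ rather than $R^{1-\delta}$, with the $R^{\varepsilon}$ margin absorbing the tails. There $|\widehat\mu| \lesssim C R^{-s/2}$, giving $\mu(\Theta) \lesssim C R^{-s/2}\|\widehat\psi\|_{L^1} \lesssim C R^{-s/2}$. Checking that the tails of the bumps outside the hypothesis region cost only $O_N(R^{-N})$, using the $R^\varepsilon$ margin, is the technical step I expect to take the most care.
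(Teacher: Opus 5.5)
Your framing matches the paper's: pair $\mu$ with a test function that is $\gtrsim 1$ on $\Theta\cap S$, harmless elsewhere on $S$, and whose Fourier transform has essentially no mass in $\{|\zeta|\lesssim R\}$. However, the step you flag as the main obstacle is never resolved, and the concrete fix you propose fails.

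The transform of $\eta(Rt)-c\,\eta(R^{1-\delta}t)$ is $R^{-1}\widehat\eta(\tau/R)-cR^{\delta-1}\widehat\eta(\tau/R^{1-\delta})$. The subtracted term alone has $L^1$ mass $c\|\widehat\eta\|_{L^1}\sim 1$, all of it in $|\tau|\le R^{1-\delta}$. At $\tau=0$ the total is $R^{-1}\widehat\eta(0)(1-cR^{\delta})\neq 0$. So with $c=1/2$ the bound $\int|\widehat\psi||\widehat\mu|$ is still only $O(C)$.

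No tuning within your class repairs this. Cancellation on a neighbourhood of $\tau=0$ would force the coefficient $R^{-\delta}$ and make $\widehat\eta$ flat at the origin. That gives $\int t^2\eta(t)\,dt=0$, which is impossible for $0\le\eta\not\equiv 0$.

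Even a genuine band-pass at scale $R^{1-\delta}$ would lose $R^{\delta s/2}$. Worse, for $\delta\ge\varepsilon$ its physical width $R^{\delta-1}$ leaves the region $|\Pi^\perp(z-z_0)|\le R^{\varepsilon-1}$, and that region is the only place where the hypothesis pins $|\zeta_0\cdot(z-z_0)|\le 1/R$.

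Relatedly, your ``key point'' is misstated. The factor $\chi_Q(\Pi z)\chi(R^{1-\varepsilon}w)$ does not localize the $\zeta_0$-component, so the hypothesis says nothing on its support unless the $\zeta_0$-factor is itself negligible at distances $\gtrsim R^{\varepsilon-1}$. You then assert that the cutoff can sit at scale $\sim R$ without exhibiting such a function. That construction is the whole content of the lemma.

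The missing idea is modulation. The paper takes $\chi(z)=\chi_1(A^{-1}\Pi(z-z_0))\,\chi_2(R^{1-\varepsilon}\Pi^\perp(z-z_0))$ with $\chi_1,\chi_2\ge 0$. This is localized at scale $R^{\varepsilon-1}$ in \emph{all} of $V^\perp$, hence inside the hypothesis region. It then considers $\int e^{-iR\zeta_0\cdot(z-z_0)}\chi(z)\,d\mu(z)$.

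On $S\cap\operatorname{supp}\chi$ the hypothesis gives $|R\zeta_0\cdot(z-z_0)|\le 1$. So the real part of the integrand is at least $\cos(1)\,\chi\ge 0$, and at least $1/2$ on $\Theta$. Hence the real part of the integral is at least $\mu(\Theta)/2$.

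On the Fourier side, the modulated bump has transform $\widehat\chi$ translated by $R\zeta_0/2\pi$. Since $\widehat\chi$ has width $R^{1-\varepsilon}\ll R$ in $V^\perp$, only $O_N(R^{-N})$ of its $L^1$ mass lies in $\{|\Pi^\perp\zeta|\le R/10\}$; this is exactly where the $R^\varepsilon$ margin is used. On the complement, $|\widehat\mu|\lesssim CR^{-s/2}$.

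In your notation, the correct real band-pass factor is $g(t)=\cos(Rt)\,\eta(Rt/L)$ with a fixed $L\ge 4\pi$. It satisfies $g\ge\cos 1$ for $|t|\le 1/R$, and $g$ is $O_N(R^{-N})$ for $|t|\gtrsim R^{\varepsilon-1}$. Its transform satisfies $\|\widehat g\|_{L^1}\le\|\widehat\eta\|_{L^1}$ and is supported in $|\tau|\ge R/4\pi$. Replace $\eta(R\zeta_0\cdot z)$ by $g(\zeta_0\cdot z)$, and shrink the support of $\chi$ to the ball of radius $1/2$ so the dichotomy $|\zeta_0\cdot z|\le 1/R$ or $|\zeta_0\cdot z|\gtrsim R^{\varepsilon-1}$ holds on $S$. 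With these changes your argument closes, and it is just the real part of the paper's.
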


\end{minipage}

\begin{figure}
\begin{tikzpicture}[scale=0.8, line cap=round]

% --- Rectangle ---
\draw[thick] (-1, -0.5) rectangle (1, 0.5);

% --- Central point ---
\fill (0,0) circle (2pt);
\node[below right] at (0,0) {$z_0$};

% --- Single arrow from center ---
\draw[->, thick] (0,0) -- (0.0,0.8);
\node[right] at (0,0.8) {$\zeta_0$};

% --- Dashed construction boundary ---
\draw[dashed] (-1,-2) rectangle (1,2);

% --- Brownian-like top curve ---
\draw[line width=0.5mm]
  plot[smooth] coordinates {
(-2.2,0.43)
(-2.1,0.4)
(-2,0.4323406237806613)
(-1.9,-0.4794400339778237*1.8)
(-1.8,-0.34966984574321963*1.8)
(-1.7,-0.8702168521024216*1.8)
(-1.6,-0.5891287743898475*1.8)
(-1.5,-0.48736699306855574*1.8)
(-1.4,-1.1392685847942174*1.8)
(-1.3,-0.9947769392227928*1.8)
(-1.2,-0.6078871301893458*1.8)
(-1.1,-0.3691789001923906*1.5)  
  (-1,-0.3076490834936588*1)
(-0.9,-0.2411719263694608*1.2)
(-0.8,-0.19786663290363526*1.2)
(-0.7,-0.12675601858002556*1.2)
(-0.6,-0.1376824569451548*1.2)
(-0.5,-0.05452776976874885*1.2)
(-0.4,0.008580611507976688*1.2)
(-0.3,0.0048367547963774404*1.2)
(-0.2,-0.03043055057932733*1.2)
(-0.1,0)
    (0,0)
    (0.1,0)    
    (0.2,0.07978170592162184*3)
    (0.3,-0.003079693525864391*3)
    (0.4,0.0868786281021823*3)
    (0.5,0.05452385684315648*3)
    (0.6,0.04003071231504238*3)
    (0.7,0.006437820285207341*3)
    (0.8,0.05807728562112096*3)
    (0.9,0.06714776757917238*3)
    (1,0.13570587081068581*3)
    (1.1,0.4071176124320574)
    (1.2,0.09038716042809697*2)
    (1.3,-0.7692061830122225*2)
    (1.4,0.08539056524848632*2)
    (1.5,-0.09185167273312289*2)
    (1.6,-0.3374211748349123*2)
    (1.7,-0.45615403521153786*2)
    (1.8,0.23171162178601756*2)
    (1.9,0.38974376423196877*2)
    (2,0.3586106242763749*2)
  };

\end{tikzpicture}
\caption{An illustration of the setup to Lemma \ref{Lemmaoaijdwoiwajdioaw}. Here $V$ is the $x$-axis, $V^\perp$ the $y$-axis, and $S$ is the curve through $z_0$. The vector $\zeta_0$ is `transverse' to the set $S$ at $z_0$, in the sense that within the dashed box with height $R^\varepsilon R^{-1}$, the set $S$ only contains points in the shorter solid box with height $R^{-1}$.}
\end{figure}

\begin{remark}
    Lemma \ref{Lemmaoaijdwoiwajdioaw} is a refinement of the Ess\'{e}en Concentration Inequality\footnote{See e.g. Lemma 7.17 of \cite{TaoVu} for the full Statement and proof of the Ess\'{e}en Concentration Inequality, the method first appearing in a 1966 paper of Carl-Gustav Ess\'{e}en \cite{Esseen}. Taking $\varepsilon = 1/R$ in the version of the inequality stated in Lemma 7.17 of \cite{TaoVu}, and rescaling the inequality non-isotropically so it applies to ellipsoids rather than just balls gives \eqref{eq:laawdopjawoidjawoij12oi3j12oi3bel}.}, and thus results to a general scheme in harmonic analysis which relates bounds for oscillatory integrals to control on the size of the \emph{stationary sets} associated with such oscillatory integrals. For any symmetric ellipsoid $Q \subset V$, if $\Theta$ is defined as in Lemma \ref{Lemmaoaijdwoiwajdioaw}, then applying the Ess\'{e}en Concentration Inequality to $\Theta$ shows that
    \begin{equation} \label{eq:laawdopjawoidjawoij12oi3j12oi3bel}
        \mu(\Theta) \lesssim \fint_{Q^* \times B_R^{V^\perp}} |\widehat{\mu}(\xi)|,
    \end{equation}
    where $Q^*$ is the dual ellipsoid to $Q$ centered at the origin, and $B_R^{V^\perp}$ is the ball centered at the origin in $V^\perp$ of radius $R$. Substituting the bound $|\widehat{\mu}(\xi)| \lesssim 1 + |\xi|^{-s/2}$ into \eqref{eq:laawdopjawoidjawoij12oi3j12oi3bel} and integrating, we find that
    \begin{equation}
        \mu(\Theta) \lesssim R^{- \dim(V^\perp)} \left( 1 + R^{\dim(V^\perp) - s/2} \right).
    \end{equation}
    The Ess\'{e}en Concentration Inequality thus implies Lemma \ref{Lemmaoaijdwoiwajdioaw} when $s \leq 2\dim(V^\perp)$. The transversality assumptions in Lemma \ref{Lemmaoaijdwoiwajdioaw} allow us to obtain the required bounds in the case $s > 2 \dim(V^\perp)$ as well, which is necessary in what follows.
\end{remark}

\begin{proof}[Proof of Lemma \ref{Lemmaoaijdwoiwajdioaw}]
    If $R \leq 2^{1/\varepsilon}$, then the bound
    \begin{equation}
        \mu(\Theta) \leq \mu(\RR^d) \leq C \lesssim C R^{-s/2}
    \end{equation}
    follows trivially, so we may assume that $R \geq 2^{1/\varepsilon}$ in what follows. Let $A: V \to V$ be a linear isomorphism mapping the unit ball to $Q$. Consider a symmetric Schwartz function
    \begin{equation}
        \chi(z) = (\chi_1 \circ A^{-1} \circ \Pi)(z - z_0) (\chi_2 \circ \Pi^\perp)(R^{1 - \varepsilon} (z - z_0)),
    \end{equation}
    where
    \begin{equation}
        \chi_1(x) = 0\ \text{for $|x| \geq 2$} \quad\text{and}\quad \chi_2(y) = 0\ \text{for $|y| \geq 1$},
    \end{equation}
    and such that
    \begin{equation} \label{awdiaoqwj12312}
        |\chi_1(x)|, |\chi_2(y)| \geq 1 \quad\text{for $|x| \leq 1$ and $|y| \leq 1/2$}.
    \end{equation}
    Define
    \begin{equation}
        \eta(z) = e^{i R \zeta_0 \cdot (z - z_0)} \chi(-z).
    \end{equation}
    We will bound $\mu(\Theta)$ by studying the convolution $\eta * \mu$.

    We calculate directly that
    \begin{equation} \label{aiodjwaoij123123}
        (\eta * \mu)(0) = \int \eta(-z) d\mu(z) = \int e^{- i R \zeta_0 \cdot (z - z_0)} \chi(z) d\mu(z).
    \end{equation}
    On the support of the integrand, $z \in S$, $\Pi(z - z_0) \in Q$ and $|\Pi^\perp(z - z_0)| \leq R^\varepsilon / R$. By assumption, this means that $|\zeta_0 \cdot (z - z_0)| \leq 1/R$, and thus that
    \begin{equation}
        \text{Re} \left(e^{-i R \zeta_0 \cdot (z - z_0)} \right) = \cos \left( R \zeta_0 \cdot (z - z_0) \right) \geq \cos(1) \geq 1/2.
    \end{equation}
    Thus the real part of the integrand in \eqref{aiodjwaoij123123} is non-negative. Moreover, we conclude from \eqref{awdiaoqwj12312} and that $R \geq 2^{1/\varepsilon}$ that
    % R >= 2^{1/eps}
    \begin{equation}
        \chi(z) \geq 1 \quad\text{if $z \in \Theta$},
    \end{equation}
    and so
    \begin{equation} \label{awidjawioeq2312}
    \begin{split}
    \text{Re}(\eta * \mu)(0) &\geq \int_{\Theta} \text{Re} \left( e^{-i R \zeta_0 \cdot (z - z_0)} \right) \chi(z) d\mu(z) \geq \mu(\Theta) / 2.
    \end{split}
    \end{equation}
    Since $\eta * \mu$ is a smooth function, rearranging \eqref{awidjawioeq2312} gives
    \begin{equation} \label{aiodjaoi1241421}
        \mu(\Theta) \leq 2 \| \eta * \mu \|_{L^\infty(\RR^d)}.
    \end{equation}
    In the remainder of the argument, we use the Fourier transform to justify that the upper bound $\| \eta * \mu \|_{L^\infty(\RR^d)} \lesssim R^{-s/2}$ holds, which will complete the proof.

    Define
    \begin{equation}
        \Omega = \{ \zeta: |\Pi^\perp(\zeta)| \leq R/10 \},
    \end{equation}
    Symmetries of the Fourier transform, and the smoothness of $\chi$, imply that
    \begin{equation} \label{djawoidjqioj21oi3j12io3j21}
        \| \widehat{\eta} \|_{L^1(\Omega)} \lesssim_N R^{-N}\ \text{for all $N \geq 0$,} \quad\text{and}\quad \| \widehat{\eta} \|_{L^1(\Omega^c)} \lesssim 1,
    \end{equation}
    where the implicit constants in \eqref{djawoidjqioj21oi3j12io3j21} depend only on $d$ and $\varepsilon$. Combining these two estimates with the bounds
    \begin{equation}
        \| \widehat{\mu} \|_{L^\infty(\Omega)} \lesssim C \quad\text{and}\quad \| \widehat{\mu} \|_{L^\infty(\Omega^c)} \lesssim CR^{-s/2},
    \end{equation}
    which follow from the Fourier decay of $\mu$, H\"{o}lder's inequality implies that
    \begin{equation} \label{eqawdkaiodjawqoidjqwodiqjwdioq}
        \| \widehat{\eta * \mu} \|_{L^1(\RR^d)} = \| \widehat{\eta}\; \widehat{\mu} \|_{L^1(\RR^d)} \lesssim R^{-s/2}.
    \end{equation}
The Hausdorff-Young inequality, \eqref{aiodjaoi1241421} and \eqref{eqawdkaiodjawqoidjqwodiqjwdioq} imply $\mu(\Theta) \lesssim C R^{-s/2}$.
\end{proof}

Our stategy to upper bounding the Fourier dimension of a manifold $M$ involves taking a measure $\mu$ with $|\widehat{\mu}(\zeta)| \lesssim |\zeta|^{-s/2}$ on $M$, and to cover $M$ by a family of $N(R)$ slabs $\{ \Theta_i \}$ to which we can apply Lemma \ref{Lemmaoaijdwoiwajdioaw} for appropriate $\zeta_i$ and $R_i$. Then we conclude from Lemma \ref{Lemmaoaijdwoiwajdioaw} that
\begin{equation} \label{awoidjawoidjawoidjawiodj123123}
    \mu(M) \leq \sum\nolimits_i \mu(\Theta_i) \lesssim \sum\nolimits_i R_i^{-s/2}.
\end{equation}
If for each $\varepsilon > 0$, we can choose $\{ \Theta_i \}$ with $\sum_i R_i^{-s/2} \leq \varepsilon$, then taking $\varepsilon \to 0$ yields that $\mu(M) = 0$. We thus conclude that $\fordim(M) \leq s$.

One might compare this process to another notion of dimension, known as \emph{affine dimension}, introduced by Daniel Oberlin \cite{Oberlin}, which is sensitive to the same kind of geometric features of a manifold as the Fourier dimension. The affine dimension is defined very similarly to the Hausdorff dimension -- for a particular $s > 0$, we define an outer measure, $A^s(E)$ for a set $E \subset \RR^d$, by setting
\begin{equation}
    A^s(E) = \lim_{\delta \to 0} \inf \left\{ \sum\nolimits_i |Q_i|^{s/d} \right\},
\end{equation}
where the infimum is taken over all families of ellipsoids $\{ Q_i \}$ which cover $E$, with each ellipsoid in the cover having diameter at most $\delta$. The main distinction here between Hausdorff measure is that the sets $\{ Q_i \}$ are allowed to be ellipsoids rather than just balls. The affine dimension of a set $E$, which we denote by $\adim(E)$, is then the supremum of all $s > 0$ so that $A^s(E) = 0$. The quantity is natural to consider when studying bounds on convolution and restriction operators, but has slightly unusual geometric properties -- in particular, the affine dimension of a submanifold of Euclidean space is always less than it's dimension.

When $M$ is a manifold, all efficient covers of $M$ by ellipsoids are covers by slabs $\{ \Theta_i \}$ of the form covered in Lemma \ref{Lemmaoaijdwoiwajdioaw}. Now suppose that in our proof of the Fourier dimension bounds, there is some quantity $a > 0$ so that a generic slab $\Theta_i$ in the covers we consider has $|\Theta_i| \approx R_i^{-a}$. This is the case in both the proofs we give in this paper utilizing Lemma \ref{Lemmaoaijdwoiwajdioaw}; in Proposition \ref{lemma:UpperBoundFourierDimensionProposition} we have $a = m/2 + k - 1/6$, where $d = m + k$, and in Theorem \ref{thm:zhutheorem} we have $a = (n-k)/2 + 1$, where $d = n + 1$. Under this assumption, 
\begin{equation}
    \sum\nolimits_i R_i^{-s/2} \approx \sum\nolimits_i |\Theta_i|^{s/2a}.
\end{equation}
Thus, heuristically, if our method gives a Fourier dimension bound $\fordim(M) \leq s$, then $M$ should also satisfy an affine dimension bound $\adim(M) \leq ds/2a$. 
%
% d(3m - 1)/(3m - 1 + 6k)
% (n+1)(n-k)/((n-k) + 2)
%
If one is to investigate the limitations of the method of this paper as to obtaining sharp Fourier dimension bounds for manifolds, a natural question to investigate is as to whether there exists a manifold $M$ such that whenever we consider a cover $\{ \Theta_i \}$ of $M$ such that a generic slab has $|\Theta_i| \approx R_i^{-a}$, then $\adim(M) < (d/2a) \fordim(M)$. We are unaware if such a manifold exists at this time.

\section{Manifolds of High Codimension Cannot Be Salem} \label{sec:aowdjwaoidjioawdjmanifolds_of_high_codimension_cannot_be_salem}

Using the bounds obtained in Lemma \ref{Lemmaoaijdwoiwajdioaw}, we now prove the only if condition in Theorem \ref{thm:blehtheorem}, which as a result also finishes the proof of Theorem \ref{thm:mainTheorem}.

\begin{prop} \label{lemma:UpperBoundFourierDimensionProposition}
    Let $M$ be an $m$-dimensional $C^{2 + \alpha}$-submanifold of $\RR^{m+k}$, where $0 < \alpha \leq 1$. Suppose that for each $z_0 \in M$, there exists $\zeta_0 \in (T_{z_0} M)^\perp$ and $v_0 \in T_{z_0}M$ so that the first and second directional derivatives of the map $z \mapsto \zeta_0 \cdot (z - z_0)$ vanish in the direction of $v_0$ at $z_0$. Then $M$ has Fourier dimension at most $m - \alpha/(\alpha + 2)$, and thus is not a Salem set.
\end{prop}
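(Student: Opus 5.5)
The plan is to follow the covering strategy described after Lemma~\ref{Lemmaoaijdwoiwajdioaw}, i.e.\ the estimate \eqref{awoidjawoidjawoidjawiodj123123}. Let $\mu$ be a finite Borel measure on $M$ with $|\widehat{\mu}(\zeta)| \leq C\langle \zeta\rangle^{-s/2}$ and $s > m - \alpha/(\alpha+2)$. We will show $\mu(K) = 0$ for every compact piece $K$ of $M$.

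\textbf{Local setup.} Work locally, writing $M$ as a graph over its tangent plane near a point. For $z_0 \in M$, let $f_{z_0}(x) = \zeta_0 \cdot (z(x) - z_0)$, where $x$ is the tangent coordinate. By hypothesis, and with uniform $C^{2+\alpha}$ bounds on $K$,
\[ f_{z_0}(0) = 0, \qquad \nabla f_{z_0}(0) = 0, \qquad H(v_0,v_0) = 0, \]
where $H = \mathrm{Hess}\, f_{z_0}(0)$. I would first arrange that $v_0$ can be taken in $\ker H$. In the application, Lemma~\ref{lemma:StationaryDegenerateHighCodimensionLemma} comes from the Adams--Lax--Phillips singularity of some $\sum \eta_i A_i$, so a kernel vector is available. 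If only $H(v_0,v_0)=0$ is assumed, I would try to reduce to this case by decomposing the tangent space $H$-orthogonally. This reduction is one of the two places I expect real trouble. With $Hv_0 = 0$, Taylor's theorem gives
\[ |f_{z_0}(x_0 + t v_0 + w)| \lesssim |t|^{2+\alpha} + |w|^2 \]
for $w \perp v_0$.

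\textbf{Slabs.} Set $r = R^{-1/(2+\alpha)}$, so that $r^{2+\alpha} = R^{-1}$. Apply Lemma~\ref{Lemmaoaijdwoiwajdioaw} with $V^\perp = \mathrm{span}(\zeta_0)$ and $V = \zeta_0^\perp$. Take $Q \subset V$ to be an ellipsoid with these semi-axes:
\begin{itemize}
\item $c\,r$ in the direction $v_0$;
\item $c R^{-1/2}$ in the remaining $m-1$ tangent directions;
\item $C_0 r^2$ in the remaining $k-1$ normal directions.
\end{itemize}
Since $M$ is locally a graph over the tangent plane, any $z \in M$ with $\Pi(z - z_0) \in Q^*$ has tangent coordinate in the tangent part of $Q^*$. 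Hence, for $c$ small, $|\zeta_0 \cdot (z-z_0)| = |f_{z_0}| \leq 1/R$, which is the transversality hypothesis. Conversely, the normal components of points of $M$ over the tangent part of $Q$ are $O(r^2)$, so $\Theta$ contains the whole patch $P(z_0)$ of $M$ over that tangent ellipsoid. Lemma~\ref{Lemmaoaijdwoiwajdioaw} then gives $\mu(P(z_0)) \lesssim C R^{-s/2}$. The patch has $m$-dimensional area about $r R^{-(m-1)/2}$.

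\textbf{Covering (main obstacle).} The directions $v_0(z_0)$ and $\zeta_0(z_0)$ need not vary continuously, so differently oriented anisotropic patches cannot be combined by a Vitali argument. My approach is to freeze the directions on scale $r$. Partition $K$ into $O(r^{-m})$ pieces of diameter $\sim r$ and choose a base point $z_0$ in each. For $z'$ in the same piece, replace $\zeta_0$ by its projection $\zeta'$ onto $(T_{z'}M)^\perp$, keeping $v_0$ fixed. The $C^{2+\alpha}$ regularity gives
\[ \mathrm{Hess}\, f_{z'}(v_0,\cdot) = O(|z'-z_0|^\alpha) = O(r^\alpha). \]
So on the patch at $z'$,
\[ |f_{z'}| \lesssim r^\alpha t^2 + |t|^{2+\alpha} + r^\alpha |t|\,|w| + |w|^2 \lesssim R^{-1}, \]
using $r^{2+\alpha} = R^{-1}$ and $r^{1+\alpha}R^{-1/2} \leq R^{-1}$. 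The second inequality is what must be checked carefully. Within one piece all patches now share the orientation of $v_0$, so they tile it with $O\bigl(r^{m-1} R^{(m-1)/2}\bigr)$ of them.

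\textbf{Conclusion.} In total $N(R) \lesssim r^{-1} R^{(m-1)/2} = R^{1/(2+\alpha) + (m-1)/2}$ slabs cover $K$. Hence
\[ \mu(K) \lesssim R^{\frac{1}{2+\alpha} + \frac{m-1}{2} - \frac{s}{2}} \to 0 \]
exactly when $s > m - 1 + \tfrac{2}{2+\alpha} = m - \tfrac{\alpha}{\alpha+2}$. This gives $\fordim(M) \leq m - \alpha/(\alpha+2)$.
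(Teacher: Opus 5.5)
Your argument is essentially the paper's proof. Both cover by $O(r^{-m})$ pieces of diameter $r=R^{-1/(2+\alpha)}$, freeze $\zeta_0$ and $v_0$ on each piece, use the $C^{2,\alpha}$ control of the Hessian to fit parallel slabs of size $r\times R^{-1/2}$ in the tangent directions to which Lemma~\ref{Lemmaoaijdwoiwajdioaw} applies, and count $O(r^{-1}R^{(m-1)/2})$ slabs in total.

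The paper also relies on the kernel condition $Hv_0=0$, so drop the attempted reduction from $H(v_0,v_0)=0$. It cannot work: the hyperbolic paraboloid $x_3=x_1^2-x_2^2$ satisfies that weaker hypothesis at every point (take $v_0$ along a ruling) yet is Salem. Your choice $V=\mathrm{span}(\zeta_0)^\perp$, with semi-axes $C_0r^2$ in the other $k-1$ normal directions, is more careful than the paper's write-up. There $Q\subset T_{z_0}M$ makes $\Theta$ of thickness $1/R$ in every normal direction, so for $k\geq 2$ it need not contain the whole patch of $M$.
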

\begin{proof}
    Let $\mu$ be a finite Borel measure supported on $M$, and suppose there exists $C > 0$ and $s > m - \alpha / (\alpha + 2)$ so that $|\widehat{\mu}(\zeta)| \leq C |\zeta|^{-s/2}$ for all $\xi \neq 0$. Our goal is to show that this can only hold if $\mu = 0$.

    If $\chi \in C_c^\infty(\RR^d)$, then
    \begin{equation} \label{aoidjawoidjioqe12e321e12e12d23fvwecew}
        |\widehat{\mu \chi}(\zeta)| = |(\widehat{\mu} * \widehat{\chi})(\zeta)| \lesssim |\zeta|^{-s/2}.
    \end{equation}
    By replacing $\mu$ with $\mu \chi$ for some $\chi \geq 0$ if necessary, using \eqref{aoidjawoidjioqe12e321e12e12d23fvwecew} we may assume without loss of generality that $\mu$ is compactly supported on an arbitrarily small subset of $M$. In particular, we may assume that $\mu$ is supported on a closed ball $B_0 \subset \RR^{m+k}$ small enough that for each $z_0 \in M$, the map
    \begin{equation}
        \Pi_{z_0}: M \to T_{z_0} M
    \end{equation}
    given by orthogonal projection onto $z_0 + T_{z_0} M$ is a diffeomorphism when restricted to $M \cap B_0^*$, where $B_0^*$ is a ball with the same center as $B_0$ but twice the radius. By rescaling, we may assume without loss of generality that $B_0^*$ is a unit ball. Later, we will also use the projection
    \begin{equation}
        \Pi_{z_0}^\perp: M \to (T_{z_0} M)^\perp
    \end{equation}
    of $M$ onto $z_0 + (T_{z_0} M)^\perp$. We let
    \begin{equation}
        L_{z_0}: B_{z_0} \to M
    \end{equation}
    denote the inverse of $\Pi_{z_0}$, restricted to the unit ball $B_{z_0}$ in $T_{z_0} M$. Then $L_{z_0}(B_{z_0})$ contains $M \cap B_0$, and since $M$ is $C^{2 + \alpha}$, we have the H\"{o}lder-space norm bound $\| L_{z_0} \|_{C^{2,\alpha}(B_{z_0})} \lesssim 1$, where the implicit constant is uniform for $z_0 \in M \cap B_0$.

    Let $\beta = (\alpha + 2)^{-1}$. Then we claim that for all radius $R^{-\beta}$ balls $B \subset \RR^{m+k}$ with center in $M$,
    \begin{equation} \label{eq:aiwodjoaiwjdoiawjio213j41oi2j3io12j}
        \mu(B) \lesssim R^{(m-1)(1/2 - \beta) - s/2}.
    \end{equation}
    Since the support of $\mu$ is covered by $O(R^{m \beta})$ such balls, it follows from \eqref{eq:aiwodjoaiwjdoiawjio213j41oi2j3io12j} that
    \begin{equation} \label{eq:aiojdoaiwjdoiawjediowaqjeoijq}
        \mu(M) \lesssim R^{m \beta + (m-1)(1/2 - \beta) - s/2}.
    \end{equation}
    If $s > m - \alpha/(\alpha + 2)$, then $m \beta + (m-1)(1/2 - \beta) - s/2 < 0$, so taking $R \to \infty$ in \eqref{eq:aiojdoaiwjdoiawjediowaqjeoijq} yields $\mu(M) = 0$, as was required. We will obtain this bounds on $B$ by further decomposing $B$ into \emph{grains}, i.e. thin tubes to which we can apply Lemma \ref{Lemmaoaijdwoiwajdioaw} and which are parallel, thus efficiently covering $B$ (see Figure \ref{figureballcover}).

    \begin{figure}
\begin{tikzpicture}[scale=1]

% A helper macro to draw one ball with tubes
%  #1 = center x
%  #2 = center y
%  #3 = tube angle
%  #4 = radius
\newcommand{\ball}[4]{
    \begin{scope}
        \clip (#1,#2) circle (#4);
        % draw parallel lines
        \foreach \i in {-30,...,30} {
            \draw[rotate around={#3:(#1,#2)},line width=0.3mm] 
                (#1-2,#2+\i*0.15) -- (#1+2,#2+\i*0.15);
        }
    \end{scope}
    \draw[line width=0.5mm] (#1,#2) circle (#4);
}

% Now place several balls with different tube orientations
\ball{0}{0}{0}{0.55}
\ball{1.1}{0}{70}{0.55}
\ball{-1.1}{0}{40}{0.55}
\ball{0.55}{0.95}{20}{0.55}
\ball{-0.55}{0.95}{-20}{0.55}
\ball{0.55}{-0.95}{-40}{0.55}
\ball{-0.55}{-0.95}{60}{0.55}

\end{tikzpicture}
\caption{We cover our manifold by balls of radius $R^{-\beta}$. To bound the measure of each ball, we further decompose the balls into \emph{grains}, families of parallel $R^{-1/2} \times R^{-\beta}$ tubes.} \label{figureballcover}
\end{figure}

    Let $B \subset \RR^{m+k}$ be a radius $R^{-\beta}$ ball, and let $z_B \in M$ denote it's center. By assumption, we can find unit vectors $\zeta_B \in (T_{z_B} M)^\perp$ and $v_B \in T_{z_B} M$ so that the first and second directional derivatives of the function $z \mapsto \zeta_B \cdot (z - z_B)$ vanish in the direction of $v_B$ at $z_B$. For each $z_0 \in B$, find a unit vector $\zeta_0 \in (T_{z_0} M)^\perp$ such that $\Pi_{z_B}^\perp \zeta_0$ is parallel to $\zeta_B$, and find a unit vector $v_0 \in T_{z_0} M$ such that $\Pi_{z_B} v_0$ is parallel to $v_B$. Then define $\Delta: B_{z_0} \to \RR$ by setting $\Delta(x) = \zeta_B \cdot ( L_{z_0}(x) - z_0 )$. To simplify notation, let $H(x)$ denote the Hessian of $f$ at $x$. By construction, $D\Delta(0) = 0$ and if $L_{z_0}(x_B) = z_B$, then $H(x_B) v_0 = 0$.

    Using the $C^2$ control on $L_{z_0}$, we find that for any $t \in \RR$ and $w \in T_{z_0} M$,
    \begin{equation} \label{eq:aiodjoaiwjdioqwdjwioqjdiowq}
        |(tv_0 + w) H(x_B) (tv_0 + w)| = |w H(x_B) w| \lesssim |w|^2.
    \end{equation}
    Since $|z - z_0| \lesssim R^{-\beta}$, it follows that $|x_B| \lesssim R^{-\beta}$. So by the $C^{2,\alpha}$ control on $L_{z_0}$, we conclude that if $|x| \leq 2R^{-\beta}$, then for any vector $w \in T_{z_0} M$,
    \begin{equation} \label{eq:ioajcoiacnioeqwjcnioewqhnfqeiwo}
        \big| w^t (H(x) - H(x_B)) w \big| \lesssim R^{-\alpha \beta} |w|^2.
    \end{equation}
    Combining \eqref{eq:aiodjoaiwjdioqwdjwioqjdiowq} and \eqref{eq:ioajcoiacnioeqwjcnioewqhnfqeiwo}, we conclude that if $c$ is sufficiently small, and we consider the ellipsoid
    \begin{equation}
        Q = \{ tv_0 + sw : |w| = 1\ \text{and}\ v_0 \cdot w = 0\ \text{and}\ (R^{\beta} t)^2 + (R^{1/2} s)^2 \leq c \}.
    \end{equation}
    then for $x,v \in Q^*$, the ellipsoid obtained by dilating $Q$ by a factor of two, $|v^t H(x) v| \leq 1/R$. And so if $x \in Q^*$ then by Taylor's theorem
    \begin{equation} \label{eq:adawioudjioaqwjdioawjdoiawjoi123123213}
        |\Delta(x)| \leq \int_0^1 |x^T H(tx) x|\; dt \leq 1/R.
    \end{equation}
    This is sufficient control on $\Delta$ to verify the assumptions of Lemma \ref{Lemmaoaijdwoiwajdioaw}.

    Let $S = M \cap B_0$. Then if $z \in S \cap \Pi^{-1}(Q^*)$, then $z = f(x)$ for some $x \in Q^*$, and so \eqref{eq:adawioudjioaqwjdioawjdoiawjoi123123213} implies $|\zeta_0 \cdot (z - z_0)| = |\Delta(x)| \leq 1/R$. Thus Lemma \ref{Lemmaoaijdwoiwajdioaw} applies, with $\varepsilon$ some fixed  positive value, and we conclude that $\mu(\Theta) \lesssim R^{-s/2}$, where $\Theta$ is the tube defined in Lemma \ref{Lemmaoaijdwoiwajdioaw}. For each $z_0$, the tube $\Theta$ is centered at $z_0$ and pointing in the direction $v_0$, where $\Pi_{z_B} v_0$ is parallel to $v_B$. This means that the tubes have the property that either $\Theta$ and $\Theta'$ are disjoint, or $M \cap \Theta'$ is contained in $M \cap \Theta^*$. But this means we can find a family of tubes $\{ \Theta \}$ of the form above covering $B$ such that $\{ M \cap \Theta \}$ has the finite intersection property. Since $H^m(M \cap \Theta) \sim R^{-(m-1)/2 - \beta}$ and $H^m(B) \sim R^{- \beta m}$, a volumetric argument tells us that it takes $O(R^{(m-1)(1/2 - \beta)})$ such tubes to cover $B$, and so we conclude that $\mu(B) \lesssim R^{(m-1)(1/2 - \beta) - s/2}$, as was required, which completes the proof.
\end{proof}

\begin{remark} \label{remark:aowdjwaoidjaiowjd} \normalfont
    If $M$ is a $C^2$ manifold, but not $C^{2 + \alpha}$ for any $\alpha > 0$, then a modification of the method of Proposition \ref{lemma:UpperBoundFourierDimensionProposition} %then one can obtain a modification of this proof, using uniform continuity of the second derivatives of a parameterization of $M$ rather than H\"{o}lder continuity,
    can be used to argue that $M$ does not support a probability measure $\mu$ such that $|\widehat{\mu}(\zeta)| \lesssim |\zeta|^{-m/2}$ for all $\zeta \in \RR^{m+k}$. But this is insufficient to show $M$ is a Salem set since it does not contradict that $|\widehat{\mu}(\zeta)| \lesssim |\zeta|^{\varepsilon - m/2}$ for all $\varepsilon > 0$.
\end{remark}

Since Lemma \ref{lemma:StationaryDegenerateHighCodimensionLemma} implies any $m$-dimensional $C^{2,\alpha}$-manifold in $\RR^{m+k}$ for $k > \rho(m/2) + 1$ satisfies the assumptions of Proposition \ref{lemma:UpperBoundFourierDimensionProposition}, this proposition, combined with Proposition \ref{prop:ExistenceofSalemCodimension}, completes the proof of Theorem \ref{thm:mainTheorem}.

\section{The Fourier Dimensions of Nondegenerate Curves}

We now use the methods used in the proof of Theorem \ref{thm:mainTheorem} to obtain an alternate proof of a result of Junjie Zhu \cite{ZhuRank} on the Fourier dimension of hypersurfaces with a fixed number of principal curvatures, using the fact that such hypersurfaces are efficiently covered by $k$-dimensional hyperplanes. As mentioned in the introduction, Zhu's proof uses a `affine smoothing argument'; given a measure with a given Fourier decay, the method utilizes the approximate invariance of the manifold under affine symmetries to obtain a smooth function with the same Fourier decay, to which we can employ the theory of oscillatory integrals.

To demonstrate the affine smoothing strategy, and discuss it's limitations, we use the method to determine the Fourier dimension of the moment curve in $\RR^3$, i.e. the curve parameterized by the function $\gamma(t) = (t,t^2,t^3)$, a model example of a nondegenerate curve. The method of proof given here emerged from discussions with Kyle Hambrook, Chun-Kit Lai, and Jaume de Dios Pont at the 2023 Harmonic Analysis and Fractal Sets conference at The Ohio State University.

\begin{prop}
    The moment curve in $\RR^3$ has Fourier dimension at most $2/3$.
\end{prop}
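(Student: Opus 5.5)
The plan is to use the stationary slab bound of Lemma \ref{Lemmaoaijdwoiwajdioaw} directly, exactly as in Proposition \ref{lemma:UpperBoundFourierDimensionProposition}. We cover the moment curve $\gamma(t) = (t,t^2,t^3)$ by slabs adapted to its osculating planes.

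Suppose $\mu$ is a finite Borel measure on the curve with $|\widehat{\mu}(\zeta)| \leq C\langle \zeta\rangle^{-s/2}$ for some $s > 2/3$. The goal is to show $\mu = 0$.

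\emph{Localization.} Multiplying by a smooth cutoff as in \eqref{aoidjawoidjioqe12e321e12e12d23fvwecew}, we may assume $\mu$ is supported on $\gamma([-1/2,1/2])$. Fix $\varepsilon > 0$ small and $R$ large. For $t_0 \in [-1/2,1/2]$ set $z_0 = \gamma(t_0)$ and let $V$ be the osculating plane $\text{span}\{\gamma'(t_0),\gamma''(t_0)\}$. Let $\zeta_0$ be the unit normal to $V$, which is parallel to $\gamma'(t_0)\times\gamma''(t_0)$. Since $\gamma''' $ is constant and $\zeta_0 \perp \gamma',\gamma''$ at $t_0$, Taylor expansion gives the exact identity
\[ \zeta_0 \cdot (\gamma(t) - z_0) = c(t_0)(t-t_0)^3 \quad\text{with } |c(t_0)| \sim 1 . \]

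\emph{Choice of slab.} Take $Q \subset V$ to be the ellipsoid with semi-axis $c_0 R^{-1/3}$ in the direction $\gamma'(t_0)$ and semi-axis $c_0 R^{-2/3}$ in the orthogonal direction within $V$, where $c_0$ is a small absolute constant. We must verify the hypothesis of Lemma \ref{Lemmaoaijdwoiwajdioaw}: if $z = \gamma(t)$ with $\Pi(z - z_0)\in Q^*$ and $|\Pi^\perp(z-z_0)|\leq R^{\varepsilon-1}$, then $|\zeta_0\cdot(z-z_0)|\leq 1/R$.
\begin{itemize}
\item Since $\gamma'(t_0)\cdot(\gamma(t)-z_0) \approx |\gamma'(t_0)|(t-t_0)$ on the small arc, the tangential constraint forces $|t-t_0|\lesssim c_0R^{-1/3}$.
\item The identity above then gives $|\zeta_0\cdot(z-z_0)|\lesssim c_0^3 R^{-1}\leq R^{-1}$ once $c_0$ is small.
\end{itemize}
Here it matters that the support was localized, so the curve is a graph over each tangent line and no far-away arc re-enters the region. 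Lemma \ref{Lemmaoaijdwoiwajdioaw} then gives $\mu(\Theta_{t_0}) \lesssim C R^{-s/2}$, uniformly in $t_0$.

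\emph{Covering.} Each slab $\Theta_{t_0}$ contains $\gamma(t)$ for all $|t-t_0|\leq c_1R^{-1/3}$. This holds because such points satisfy $\Pi(z-z_0)\in Q$ (the normal-in-$V$ displacement is $O(|t-t_0|^2)$) and $|\Pi^\perp(z-z_0)| = O(|t-t_0|^3)\leq 1/R$. Hence $O(R^{1/3})$ slabs, centered at a $c_1R^{-1/3}$-net of $t_0$'s, cover the support. Summing as in \eqref{awoidjawoidjawoidjawiodj123123},
\[ \mu(\RR^3) \lesssim C R^{1/3 - s/2}, \]
which tends to $0$ as $R\to\infty$ since $s > 2/3$. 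Thus $\mu = 0$ and $\fordim \leq 2/3$.

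\emph{Main obstacle.} The only real care is checking uniformly in $t_0$ that the transversality hypothesis of Lemma \ref{Lemmaoaijdwoiwajdioaw} holds with the anisotropic $Q$. Concretely, one must show that the region $\Pi^{-1}(Q^*)\cap\{|\Pi^\perp|\leq R^{\varepsilon-1}\}$ meets the curve only in the arc $|t-t_0|\lesssim R^{-1/3}$. This reduces to the tangential coordinate being a diffeomorphism on the localized arc, which holds with uniform constants. The same argument in $\RR^n$, using the osculating hyperplane and $Q$ with axes $R^{-j/n}$, $j=1,\dots,n-1$, gives the matching upper bound $2/n$ for general nondegenerate curves.
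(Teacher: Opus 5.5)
Your proof is correct, but it takes a different route from the paper's proof of this proposition.

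\textbf{The paper's route.} The paper uses affine smoothing. The identity $\gamma(t+s) = \gamma(s) + M(s)\gamma(t)$ shows that the smooth density $f = \chi * \mu$ (convolution in the parameter) inherits the decay $|\zeta|^{-s/2}$. Stationary phase in a binormal direction ($\zeta\cdot\gamma'(t_0) = \zeta\cdot\gamma''(t_0) = 0$, with $f(t_0) > 0$) then gives the \emph{lower} bound $\bigl|\int e^{2\pi i \zeta\cdot\gamma(t)} f(t)\,dt\bigr| \sim |\zeta|^{-1/3}$, which contradicts $s > 2/3$.

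\textbf{Your route, and what each buys.} You instead run the stationary-slab covering argument of Lemma \ref{Lemmaoaijdwoiwajdioaw}, which the paper reserves for general curves in Proposition \ref{prop:oiwajdoiwajdoiawjdoiawlabel}. The smoothing argument needs two special ingredients: the exact affine self-similarity of the moment curve, and an asymptotic lower bound for an oscillatory integral. So it does not transfer to other curves. Your argument needs only an upper bound on $\widehat{\mu}$ and the vanishing of the height $\zeta_0\cdot(\gamma(t)-\gamma(t_0))$ above the osculating plane to order three (order $n$ in the $n$-dimensional setting). So, as you observe, it applies verbatim to any nondegenerate curve.

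\textbf{The point that needs care.} You handled it correctly: taking $V$ to be the osculating plane, so that $\Pi^\perp$ sees only the binormal height, is what makes each $\Theta_{t_0}$ contain a full arc $|t-t_0| \lesssim R^{-1/3}$. Suppose instead the slab were built over the tangent line, with width $1/R$ in both normal directions. Since the curve leaves its tangent line at rate $|t-t_0|^2$, it would contain only an arc of length $\sim R^{-1/2}$. The count would then give $\mu(C) \lesssim R^{(1-s)/2}$, which is just the trivial bound. The proof of Proposition \ref{prop:oiwajdoiwajdoiawjdoiawlabel} is terse on exactly this choice, and your version makes it explicit.

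\textbf{A minor omission.} Your reduction to $\gamma([-1/2,1/2])$ deserves one line of justification. You could use the same affine symmetry, or run the argument on any short arc with constants depending on the arc. This is routine.
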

\begin{proof}
    Consider the parameterization $\gamma(t) = (t,t^2,t^3)$. It suffices to show that for all $s > 2/3$, no measure $\mu$ on the real line satisfies a bound of the form
    \begin{equation} \left| \int_{-\infty}^\infty e^{2 \pi i \zeta \cdot \phi(t)} d\mu(t) \right| \lesssim |\zeta|^{-s/2} \quad\text{for all $\zeta \in \RR^3$}. \end{equation}
    Suppose such a measure existed. We note that $\gamma(t + s) = \gamma(s) + M(s) \gamma(t)$, where
    \begin{equation}
        M(s) = \begin{pmatrix} 1 & 0 & 0 \\ 2s & 1 & 0 \\ 3s^2 & 3s & 1 \end{pmatrix}.
    \end{equation}
    It follows that if we fix a non-negative, smooth, compactly supported function $\chi$, and define the smooth function $f = \chi * \mu$, then
    \begin{equation} \label{aiodjwaoidjwqioje12oi312312}
    \begin{split}
        \left|\int e^{2 \pi i \zeta \cdot \gamma(t)} f(t)\; dt \right| &= \left| \int e^{2 \pi i \zeta \cdot \gamma(t+s)} \chi(s) d\mu(t)\; ds \right|\\
        &\leq \int \chi(s) \left| \int e^{2 \pi i \zeta \cdot \gamma(t + s)} d\mu(t) \right|\; ds\\
        &\leq \int \chi(s) \left| \int e^{2 \pi i M(s)^t \zeta \cdot \gamma(t)} d\mu(t) \right|\; ds\\
        &\lesssim \int \chi(s) |M(s)^t \zeta|^{-s/2}\\
        &\lesssim |\zeta|^{-s/2}.
    \end{split}
    \end{equation}
    Now that we have a smooth meaure with the appropriate Fourier decay, we may apply a localization argument to assume this function is supported on an arbitrarily small subset of space. Once this localization is done, and $t_0$ is chosen so that $f(t_0) > 0$, then the method of stationary phase\footnote{See, e.g. Chapter VIII, Proposition 3 of \cite{Stein}.} implies that for all vectors $\zeta$ with $\zeta \cdot \gamma'(t_0) = \zeta \cdot \gamma''(t_0) = 0$,
    \begin{equation} \label{aiowjdwaoidjaqwoije12321321}
        \int e^{2 \pi i \lambda \zeta \cdot \gamma(t)} f(t)\; dt \sim |\zeta|^{-1/3}.
    \end{equation}
    Since \eqref{aiowjdwaoidjaqwoije12321321} contradicts \eqref{aiodjwaoidjwqioje12oi312312} if $s > 2/3$ and $|\zeta|$ is taken appropriately large, this completes the proof.
\end{proof}

The proof clearly generalizes to the moment curve $\gamma(t) = (t,,\dots,t^n)$ in $\RR^n$. However, it is difficult to see how it could apply to other nondegenerate curves, since these do not have the property that they can be parameterized by a function $\gamma$ such that for each $s$, the curve $t \mapsto \gamma(t + s)$ is an affine transformation of the curve $t \mapsto \gamma(t)$. The methods considered thus apply in certain situations where the affine smoothing strategy seems impractical.

\begin{prop} \label{prop:oiwajdoiwajdoiawjdoiawlabel}
    If $n \geq 2$, and if $C \subset \RR^n$ is a $C^n$-curve, then $\fordim(C) \leq 2/n$.
\end{prop}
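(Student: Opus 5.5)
The plan is to run the covering scheme of \eqref{awoidjawoidjawoidjawiodj123123}, using Lemma \ref{Lemmaoaijdwoiwajdioaw} with slabs of codimension one adapted to the curve at each point. No nondegeneracy is needed, because of one elementary observation. At any point $t_0$, the $n-1$ vectors $\gamma'(t_0),\dots,\gamma^{(n-1)}(t_0)$ lie in $\RR^n$, so there is always a unit vector $\zeta_0$ orthogonal to all of them. By Taylor's theorem for the $C^n$ map $\gamma$,
\[ |\zeta_0 \cdot (\gamma(t)-\gamma(t_0))| \le K |t-t_0|^n, \]
where $K$ depends only on $\sup|\gamma^{(n)}|$ over a compact parameter interval. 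This is uniform in $t_0$.

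\textbf{Setup.} Suppose $\mu$ is a finite Borel measure on $C$ with $|\widehat{\mu}(\zeta)|\le C\langle\zeta\rangle^{-s/2}$ and $s>2/n$. As in the proof of Proposition \ref{lemma:UpperBoundFourierDimensionProposition}, multiplying by a cutoff via \eqref{aoidjawoidjioqe12e321e12e12d23fvwecew} lets us assume $\mu$ is supported on a short arc $\gamma(I)$. We take $\gamma$ to be an arc-length parametrization. The arc is chosen so short that $|\gamma'(t)-\gamma'(t_0)|\le 1/10$ for all $t,t_0\in I^*$, and so that $C\cap B^*$ is exactly $\gamma(I^*)$ for a ball $B^*$ around the arc, with $I^*$ a slightly larger interval.

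\textbf{Verifying the lemma's hypothesis.} Fix $R$ large and $t_0\in I$, and put $z_0=\gamma(t_0)$. Let $V=\zeta_0^\perp$, so that $V^\perp=\mathrm{span}(\zeta_0)$ and $\gamma'(t_0)\in V$. Let $Q\subset V$ be the ball of radius $cR^{-1/n}$, with $c$ small, to be chosen depending on $K$. We must check: if $z\in C$ satisfies $\Pi(z-z_0)\in Q^*$ and $|\zeta_0\cdot(z-z_0)|\le R^{\varepsilon}/R$, then $|\zeta_0\cdot(z-z_0)|\le 1/R$.
\begin{itemize}
\item Such a $z$ lies within distance $\lesssim R^{-1/n}$ of $z_0$. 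Hence $z=\gamma(t)$ with $t\in I^*$.
\item Since $\gamma'(t)$ stays within $1/10$ of the unit vector $\gamma'(t_0)\in V$, the map $t\mapsto \Pi(\gamma(t)-z_0)$ has derivative of size $\ge 1/2$ in the $\gamma'(t_0)$ direction. Therefore $|t-t_0|\le 2|\Pi(z-z_0)|\le 4cR^{-1/n}$.
\item The Taylor bound then gives $|\zeta_0\cdot(z-z_0)|\le K(4c)^n/R\le 1/R$ once $c$ is small enough.
\end{itemize}
So Lemma \ref{Lemmaoaijdwoiwajdioaw}, applied with a fixed $\varepsilon$, yields $\mu(\Theta_{t_0})\lesssim CR^{-s/2}$, with a constant independent of $t_0$ and $R$.

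\textbf{Covering.} The same two estimates show that $\Theta_{t_0}$ contains the whole arc $\gamma([t_0-c'R^{-1/n},\,t_0+c'R^{-1/n}])$ for some small $c'$ depending on $c$ and $K$. On this arc $|\Pi(\gamma(t)-z_0)|\le|t-t_0|$ and $|\zeta_0\cdot(\gamma(t)-z_0)|\le K|t-t_0|^n\le 1/R$. Choosing $O(R^{1/n})$ equally spaced $t_0$ therefore covers the support of $\mu$, and
\[ \mu(C)\lesssim R^{1/n}R^{-s/2}\to 0 \quad\text{as } R\to\infty, \]
since $s>2/n$. Thus $\mu=0$, and so $\fordim(C)\le 2/n$.

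\textbf{Main obstacle.} The delicate step is checking the hypothesis of Lemma \ref{Lemmaoaijdwoiwajdioaw}: it requires control over \emph{all} points of $C$ in the slab-like region of height $R^\varepsilon/R$ above $Q^*$, not only those near $z_0$ in parameter. This is handled by the localization to $B^*$ together with the injectivity of $\Pi\circ\gamma$ near $t_0$ from the second bullet. We must also keep all constants uniform in $t_0$; this follows from the uniform Taylor remainder bound, which needs only $\gamma\in C^n$.
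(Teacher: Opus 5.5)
Your proof is correct and uses the same strategy as the paper. You pick $\zeta_0$ orthogonal to $\gamma'(t_0),\dots,\gamma^{(n-1)}(t_0)$, use the order-$n$ vanishing to get $|\zeta_0\cdot(z-z_0)|\le 1/R$ over parameter length $\sim R^{-1/n}$, apply Lemma~\ref{Lemmaoaijdwoiwajdioaw}, and cover by $O(R^{1/n})$ pieces.

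The one real difference is your choice $V=\zeta_0^\perp$, and it is the right one. As written, the paper takes $Q$ to be an interval in $T_{z_0}C$. Its $\Theta$ is then a tube of radius $1/R$ about the tangent line, thin in every normal direction. For $n\ge 3$, a curve with nonzero curvature leaves such a tube after arc length $O(R^{-1/2})$ rather than $R^{-1/n}$, because its normal deviation is about $\kappa h^2/2$. So the paper's claim that $O(R^{1/n})$ such tubes cover the arc does not literally hold; for $n=2$ the two choices coincide.

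Your codimension-one slab is thin only in the $\zeta_0$ direction and is a round ball of radius $cR^{-1/n}$ in the other directions. It really does contain the full arc of length $\sim R^{-1/n}$, which is what makes the count valid. You also verify the lemma's hypothesis on the whole $R^{\varepsilon}/R$-thickened region, using the monotonicity of $\gamma'(t_0)\cdot(\gamma(t)-z_0)$. The paper only checks $|\Delta|\le 1/R$ on the curve over the interval and leaves this step implicit.
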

\begin{proof}
    Let $C$ be a curve in $\RR^n$, and let $\mu$ be a measure supported on $C$ such that $|\widehat{\mu}(\zeta)| \lesssim |\zeta|^{-s/2}$, where $s > 2/n$. Our goal is to show that $\mu = 0$. By localization, we may assume $\mu$ is supported on a small ball $B_0$, small enough that for the maps $L_{z_0}: B_{z_0} \to C$ as defined in the proof of Proposition \ref{lemma:UpperBoundFourierDimensionProposition} are immersions, and $C \cap B_0 \subset L(B_{z_0})$ for each $z_0 \in C \cap B_0$. For each $z_0 \in C \cap B_0$, pick $\zeta_0 \in \RR^d$ so that the function $z \mapsto \zeta_0 \cdot (z - z_0)$ vanishes to order $n$ at $z_0$. If we define $\Delta: B_{z_0} \to \RR$ by setting $\Delta(x) = \zeta_0 \cdot (L_{z_0}(x) - z_0)$, then $\partial_t^j \Delta(0) = 0$ for $0 \leq j < n$, and so it follows that if $c > 0$ is suitably small, and we consider the interval $I \subset T_{z_0} C$ of length $c R^{-1/n}$, then $|\Delta(x)| \leq 1/R$ for all $x \in I$. Thus Lemma \ref{Lemmaoaijdwoiwajdioaw} applies, with $\varepsilon$ to an arbitrary positive value, and we conclude that $\mu(\Theta) \lesssim R^{-s/2}$, where $\Theta$ is the tube defined in Lemma \ref{Lemmaoaijdwoiwajdioaw}. The section of the curve upon which $\mu$ is supported is covered by $O(R^{1/n})$ such tubes, and so it follows that $\mu(C) \lesssim R^{1/n - s/2}$. If $s > 2/n$, then it follows that $\mu = 0$, as was required.
\end{proof}

Combining the lower bound of Proposition \ref{prop:oiwajdoiwajdoiawjdoiawlabel} with the upper bound of Proposition \ref{prop:dkpoawkdpoawdkpaowkkvmdsovselabel} completes the proof of Theorem \ref{momentcurvetheorem}.

\section{The Fourier Dimension of Hypersurfaces of Constant Nullity}

Finally, we obtain the new proof of upper bounds for the Fourier dimension of hypersurfaces of constant nullity.

\begin{prop} \label{prop:oaiwjdioawjdioawojidaoiwjdiaowdjioalabel}
    Let $M \subset \RR^{n+1}$ be a $C^2$ hypersurface, such that at each point of $M$, exactly $k$ principal curvatures vanish. Then $M$ has Fourier dimension at most $n - k$.
\end{prop}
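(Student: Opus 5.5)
Let $\mu$ be a finite Borel measure supported on $M$ with $|\widehat{\mu}(\zeta)| \leq C|\zeta|^{-s/2}$ for some $s > n-k$. We will show $\mu = 0$, following the template of Propositions \ref{lemma:UpperBoundFourierDimensionProposition} and \ref{prop:oiwajdoiwajdoiawjdoiawlabel}. As there, we localize using \eqref{aoidjawoidjioqe12e321e12e12d23fvwecew}, so that $\mu$ is supported on a small ball $B_0$ on which $M$ is a graph over each tangent plane $T_{z_0}M$ via the maps $L_{z_0}$. We then cover $M \cap B_0$ by slabs to which Lemma \ref{Lemmaoaijdwoiwajdioaw} applies, with $\zeta_0$ the unit normal $\nu(z_0)$ at $z_0$.

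The geometric input is the classical fact that a $C^2$ hypersurface of constant nullity $k$ is ruled. The null distribution $\ker(d\nu)$ has constant rank $k$, is integrable, and its leaves are open pieces of $k$-dimensional affine planes. Moreover, the Gauss map $\nu$ is constant along each leaf. Consequently, for $z_0 \in M$ the function $\Delta(x) = \nu(z_0)\cdot(L_{z_0}(x) - z_0)$ vanishes identically on the flat $k$-plane $N_{z_0} = \ker d\nu(z_0)$ through the origin.

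Near this plane, $\Delta$ is controlled by the Hessian in the $n-k$ complementary directions. If $x = u + w$ with $u \in N_{z_0}$ of size $O(1)$ and $w \perp N_{z_0}$, then the vanishing on the leaf, together with the constancy of the normal along it, gives
\[ |\Delta(u+w)| \lesssim |w|^2. \]
This uses only $C^2$ bounds on $L_{z_0}$, uniformly in $z_0 \in B_0$, and yields $\Delta$ and $D\Delta$ equal to zero along the leaf.

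We now take $V = T_{z_0}M$ and let $Q \subset V$ be the ellipsoid with axes of length $c$ along $N_{z_0}$ and $cR^{-1/2}$ in the $n-k$ orthogonal directions. For $c$ small, $|\Delta| \leq 1/R$ on $Q^*$. Since the normal is nearly constant on $M \cap \Pi^{-1}(Q^*)$, the points of $M$ there satisfy the hypothesis of Lemma \ref{Lemmaoaijdwoiwajdioaw}, with $\Pi^\perp$ the normal direction and any fixed $\varepsilon > 0$. The lemma then gives $\mu(\Theta_{z_0}) \lesssim C R^{-s/2}$ for the slab $\Theta_{z_0}$ of dimensions $1^k \times (R^{-1/2})^{n-k} \times R^{-1}$.

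Finally, we cover $M \cap B_0$ by such slabs. Since the leaves foliate $M$ transversally in a $C^1$ way, we choose an $R^{-1/2}$-separated net in a transversal $(n-k)$-dimensional section and take the slabs centered at the net points. This gives $O(R^{(n-k)/2})$ slabs covering $M \cap B_0$, so
\[ \mu(M) \lesssim R^{(n-k)/2 - s/2} \to 0 \quad\text{as } R \to \infty, \]
whenever $s > n-k$.

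The main obstacle is making the covering step rigorous. The leaves through nearby points are not parallel, so we must check that the $R^{-1/2}$-neighbourhood in $M$ of a full unit-length leaf lies inside $\Pi_{z_0}^{-1}(Q)$. Here $O(R^{-1/2})$ changes of base point move the leaf direction by $O(R^{-1/2})$, by the $C^1$ regularity of the distribution, and this is what the argument needs. If full-length leaves cause trouble, we would first shrink $B_0$ so that all leaves have length $\sim 1$, or else use leaves of length $c$ and pay only a constant factor.
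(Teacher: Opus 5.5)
Your proposal is correct and follows essentially the same route as the paper. Both arguments localize $\mu$, use the Hartman ruling (the normal is constant along the flat $k$-dimensional leaves) so that $\Delta$ and $D\Delta$ vanish along $N_{z_0}$, take $Q$ with axes $c$ and $cR^{-1/2}$, apply Lemma \ref{Lemmaoaijdwoiwajdioaw}, and cover by $O(R^{(n-k)/2})$ slabs. Your treatment of the covering step is more careful than the paper's one-line ``volumetric argument'', and the Lipschitz control on leaf directions you invoke does hold for $C^2$ hypersurfaces: the leaves are the straight fibres of the $C^1$ constant-rank Gauss map, and straight fibres of a $C^1$ submersion automatically have a $C^1$ direction field.
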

\begin{proof}
    We employ the geometric aspects of the hypersurface $M$. It is a well known fact in the differential geometry literature that locally, a manifold $M$ with the properties above can be foliated, or ruled, by $k$-dimensional planes, and the normal vector to $M$ is constant along each foliation. Such a manifold is called a \emph{manifold of constant nullity} in the literature. A proof of this fact can be found in Lemma 3.1 of \cite{Hartman}.

    Now suppose $\mu$ is a finite Borel measure supported on $M$ with $|\widehat{\mu}(\zeta)| \leq C |\zeta|^{-s/2}$, with $s > n-k$. As in Proposition \ref{lemma:UpperBoundFourierDimensionProposition}, we may localize $\mu$, and in what follows we assume the support of $\mu$ is compactly contained within a subset of $M$ small enough that the entirety of the support of $\mu$ can be foliated by $k$-dimensional planes. We may also assume that $\mu$ is supported on a ball $B_0 \subset \RR^{m+k}$ with small enough radius that the maps $L_{z_0}: B_{z_0} \to M$ as defined in the proof of Proposition \ref{lemma:UpperBoundFourierDimensionProposition} are immersions which satisfy $M \cap B_0 \subset L_{z_0}(B_{z_0})$ for each $z_0 \in M \cap B_0$.

\begin{figure}
\begin{tikzpicture}[scale=1.8, line cap=round]
    % Parameters
    \def\n{30}          % number of tangent lines
    \def\R{0.5}         % radius of helix projection
    \def\H{15}        % vertical rise per step
    \def\L{1.2}         % tangent length

    % Draw tangent lines
    \foreach \i in {0,...,\n} {
        \pgfmathsetmacro{\t}{\i*\H}            % angle in degrees
        \pgfmathsetmacro{\x}{\R*cos(\t)}       % point on helix
        \pgfmathsetmacro{\y}{\R*sin(\t)}
        \pgfmathsetmacro{\dx}{-sin(\t)}        % tangent direction in xy
        \pgfmathsetmacro{\dy}{cos(\t)}

        % draw the tangent line segment
        \draw[line width = 0.3mm]
            (\x,\y) --
            ({\x + \L*\dx},{\y + \L*\dy});
    }

    % Draw the projected curve (optional)
    \draw[line width=0.5mm] plot[domain=0:360,samples=100]
        ({\R*cos(\x)},{\R*sin(\x)});
\end{tikzpicture}
\caption{This diagram is a projection of the \emph{tangent developable} of the helix $t \mapsto (\cos(t), \sin(t), t)$, i.e. the ruled surface given by the envelope of tangent lines to the helix. We use Lemma \ref{Lemmaoaijdwoiwajdioaw} to bound the measure of the strips between any two tangent lines coming from a $R^{-1/2}$ separated family of points on the curve, which efficiently bounds the measure of the entire surface.}
\end{figure}

    Given $z_0 \in B_0$, write $T_{z_0} M = U_{z_0} \oplus U_{z_0}^\perp$, where $U_{z_0}$ is the $k$-dimensional space of tangents to the foliation of $M$, and let $\zeta_0$ be the normal vector to $M$ along $U_{z_0}$. Then the function $z \mapsto \zeta_0 \cdot (z - z_0)$ vanishes to order two at all points along the $k$-dimensional plane $M \cap (z_0 + U_{z_0})$. Define $\Delta: B_{z_0} \to \RR$ by setting $\Delta(x) = \zeta_0 \cdot (L_{z_0}(x) - z_0)$. Then $\Delta(x) = 0$ and $D \Delta(x) = 0$ for $x \in U_{z_0} \cap B_{z_0}$. So it follows that if $c > 0$ is suitably small, and we define the ellipsoid $Q \subset T_{z_0} M$ by
    \begin{equation}
        Q = \{ t v + s w : v \in U_{z_0}\ \text{and}\ w \in U_{z_0}^\perp\ \text{and}\ t^2 + (R^{1/2} w)^2 \leq c \},
    \end{equation}
    then $|\Delta(x)| \leq 1/R$ for all $x \in Q^*$. Thus Lemma \ref{Lemmaoaijdwoiwajdioaw} applies, with $\varepsilon$ to an arbitrary positive value, and we conclude that $\mu(\Theta) \lesssim R^{-s/2}$, where $\Theta$ is the slab defined in Lemma \ref{Lemmaoaijdwoiwajdioaw}. Now $\Theta$ contains a strip with length $\Omega(1)$ along $U_{z_0}$ and length $O(R^{-1/2})$ in orthogonal directions. A volumetric argument shows that $B_0$ can be covered by $O(R^{(n-k)/2})$ such strips, and so it follows that
    \begin{equation} \label{eqoijaiodjawiodjawiodjawodjq324213421}
        \mu(M) \lesssim R^{\frac{n-k-s}{2}}.
    \end{equation}
    Since $s > n - k$, taking $R \to \infty$ in \eqref{eqoijaiodjawiodjawiodjawodjq324213421} yields that $\mu(M) = 0$, as was required. \qedhere

\end{proof}

That the Fourier dimension of hypersurfaces of constant nullity $k$ is at least $n - k$ is classical, and obtained by much the same reasoning as was performed in \eqref{sec:lower_bounds_for_the_codimension_threshold}, i.e. taking a smooth volume density on $M$, and using the theory of oscillatory integrals to analyze it's Fourier transform. The result was first obtained by Walter Littman \cite{MR155146}, who used it to analyze the Sobolev mapping properties of convolution with such measures, following analysis of Carl Herz \cite{Herz} and Edmund Hlawka \cite{Hlawka1,Hlawka2} for hypersurfaces with positive Gaussian curvature. Combining this lower bound with the upper bound of Proposition \ref{prop:oaiwjdioawjdioawojidaoiwjdiaowdjioalabel} completes the proof of Theorem \ref{thm:zhutheorem}.

\newpage

\bibliographystyle{amsplain}
\bibliography{CurvesCantBeSalem}

\end{document}